\theoremstyle{plain}
\newtheorem{thm}{Theorem}[section]
\newtheorem{prop}[thm]{Proposition}
\newtheorem{cor}[thm]{Corollary}
\newtheorem{lem}[thm]{Lemma}
\theoremstyle{definition}
\newtheorem{definition}[thm]{Definition}
\theoremstyle{remark}
\newtheorem*{rmk}{Remark}
\newtheorem*{cl}{Claim}
\newtheorem{case}{Case}
\begin{document}

\title{\textsc{Gluing hyperconvex metric spaces}}
\author{Benjamin Miesch}
\date{\vspace{3ex}}
\maketitle

\begin{abstract}
	We investigate how to glue hyperconvex (or injective) metric spaces such that the resulting space remains hyperconvex. We give two new criteria, saying that on the one hand gluing along strongly convex subsets and on the other hand gluing along externally hyperconvex subsets leads to hyperconvex spaces. Furthermore, we show by an example that these two cases where gluing works are opposed and cannot be combined.
\end{abstract}

%%%%%%%%%%%%%%%%%%%%%%%%%%%%%%%%%%%%%%%%%%%%%%%%%%%%%%%%%%%%%%%%%%%%%%%%%%%
% Verzeichnisse
%%%%%%%%%%%%%%%%%%%%%%%%%%%%%%%%%%%%%%%%%%%%%%%%%%%%%%%%%%%%%%%%%%%%%%%%%%%

%Inhaltsverzeichnis
%\tableofcontents

%Abbildungsverzeichnis
%\listoffigures

%Tabellenverzeichnis
%\listoftables

%%%%%%%%%%%%%%%%%%%%%%%%%%%%%%%%%%%%%%%%%%%%%%%%%%%%%%%%%%%%%%%%%%%%%%%%%%%
% Hauptteil 
%%%%%%%%%%%%%%%%%%%%%%%%%%%%%%%%%%%%%%%%%%%%%%%%%%%%%%%%%%%%%%%%%%%%%%%%%%%

%%%%%%%%%%%%%%%%%%%%%%%%%%%%%%%%%%%%%%%%%%%%%%%%%%%%%%%%%%%%%%%%%%%%%%%%%%%

\section{Introduction}

%%%%%%%%%%%%%%%%%%%%%%%%%%%%%%%%%%%%%%%%%%%%%%%%%%%%%%%%%%%%%%%%%%%%%%%%%%%

A metric space $(X,d)$ is called \emph{hyperconvex} if every collection of closed balls $\{B(x_i,r_i)\}_{i\in I}$ with $d(x_i,x_j) \leq r_i + r_j$ has non-empty intersection $\bigcap_i B(x_i,r_i) \neq \emptyset$. Hyperconvex spaces were introduced by Aronszajn and Panitchpakdi \cite{aronszajn} who proved that they are the same as injective metric spaces. Hyperconvex spaces play a crucial role in metric fixed point theory, see \cite{espinola} and the references therein.

A classical problem that arises in metric geometry is how to glue metric spaces such that their properties are preserved. Some attempts to solve this question for hyperconvex spaces can be found in \cite{piatek}, where it is shown that gluing along unique intervals preserves hyperconvexity.

As we will show, this result can be generalized to strongly convex subsets. A subset $A$ of a metric space $(X,d)$ is called \emph{strongly convex} if for each pair $x,y \in A$ the metric interval $I(x,y)= \{ z \in X : d(x,z)+d(z,y)=d(x,y) \}$ is contained in $A$. In section~\ref{sec:strongly_convex} we prove the following theorem.

\begin{thm}\label{thm:strongly_convex}
	Let $(X,d)$ be the metric space obtained by gluing a collection $(X_\lambda,d_\lambda)_{\lambda \in \Lambda}$ of hyperconvex metric spaces along some space $A$ such that $A$ is closed and strongly convex in $X_\lambda$ for each $\lambda \in \Lambda$. Then $(X,d)$ is hyperconvex as well.
\end{thm}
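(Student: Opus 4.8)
The plan is to verify hyperconvexity of $(X,d)$ directly from the definition, after recording the shape of the glued metric: inside a single piece $X_\lambda$ one has $d=d_\lambda$, while for $x\in X_\lambda$ and $y\in X_\mu$ with $\lambda\neq\mu$ the distance is $d(x,y)=\inf_{a\in A}(d_\lambda(x,a)+d_\mu(a,y))$, strong convexity of $A$ being exactly what guarantees that this formula defines a metric inducing the original $d_\lambda$ on each piece. Since hyperconvexity is equivalent to metric convexity together with the Helly (binary intersection) property for closed balls, and metric convexity of $X$ follows routinely by running geodesics through $A$, the whole weight of the proof falls on the following claim: every family $\{B(x_i,r_i)\}_{i\in I}$ with $d(x_i,x_j)\le r_i+r_j$ has a common point.

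First I would reduce to the case of two pieces. If $Y$ denotes the space glued from all pieces except a fixed $X_\mu$, then $A$ is again closed and strongly convex in $Y$ --- any geodesic between two points of $A$ stays in $A$, because within each piece this is strong convexity and a path crossing several pieces is at least as long as the corresponding broken path inside $A$ --- so $X=Y\cup_A X_\mu$ is a two-piece gluing of hyperconvex spaces, using the statement for $Y$ as an inductive hypothesis. For infinitely many pieces this induction is replaced by a Zorn's lemma / transfinite argument, the needed completeness coming from completeness of the hyperconvex pieces and closedness of $A$.

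For two pieces $X=X_1\cup_A X_2$, partition the centres into those lying in $X_1\setminus A$, in $X_2\setminus A$, and in $A$. If no centre lies strictly inside $X_2$ the common point is found in $X_1$ by hyperconvexity of $X_1$ alone, and symmetrically; so assume both sides are occupied. I would then push every foreign centre $x_j\in X_2\setminus A$ to a \emph{crossing point} $a_j\in A$ with $d(x_j,a_j)\le r_j$, replacing $B(x_j,r_j)$ by the ball $B(a_j,\,r_j-d(x_j,a_j))$ of $X_1$. Because every geodesic from a point of $X_1$ to $x_j$ passes through $A$, any point of $X_1$ lying in all these replacement balls already lies in the original balls; hence by the Helly property inside the hyperconvex space $X_1$ it suffices to choose the $a_j$ so that the replacement balls, together with the genuine $X_1$-balls, intersect pairwise. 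Writing $\sigma_j=r_j-d(x_j,a_j)\ge 0$, the conditions to arrange are $d(x_i,a_j)\le r_i+\sigma_j$ for every $X_1$-centre $x_i$ and $d(a_j,a_{j'})\le\sigma_j+\sigma_{j'}$ for foreign centres $j,j'$.

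The main obstacle is precisely this simultaneous choice of crossing points: for each single pair the required inequality follows from $d(x_i,x_j)\le r_i+r_j$ and the definition of $d$ as an infimum over $A$, but one needs a single $a_j$ that works against all $X_1$-centres at once and is mutually compatible with the other crossing points. I expect to resolve this by a second application of hyperconvexity, this time inside $X_2$: the inequalities above say exactly that certain balls, indexed by the $X_1$-centres and by the other foreign centres, must intersect, and strong convexity of $A$ is what lets the resulting solution be taken \emph{on} $A$ rather than merely in $X_2$, so that it is a legitimate crossing point. Once the $a_j$ are in hand, hyperconvexity of $X_1$ produces the desired common point, completing the two-piece case and, with the reduction above, the theorem. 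The delicate feature throughout is that $A$ need not be gated in the $X_\lambda$, so nearest-point projections need not be gates and the argument cannot simply route everything through one projection; it must instead reconcile the two sides through the auxiliary intersection problem on $A$, and this is exactly the step where strong convexity is indispensable.
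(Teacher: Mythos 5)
Your proposal gets the decisive structural fact exactly backwards. You close by saying that ``$A$ need not be gated in the $X_\lambda$, so nearest-point projections need not be gates and the argument cannot simply route everything through one projection.'' In fact, in a ($3$-)hyperconvex space a subset is closed and strongly convex \emph{if and only if} it is gated --- this is Lemma~\ref{lem:gated}, and it is the linchpin of the paper's proof. Gatedness is what makes everything else work: it yields the exact distance formula $d(x,y)=d(x,\bar{x})+d(\bar{x},\bar{y})+d(\bar{y},y)$ across pieces (Lemma~\ref{lem:distance}), it shows $A$ itself is hyperconvex (Lemma~\ref{lem:hyperconvex}), and it gives a canonical, exact choice of your ``crossing points,'' namely $a_j=\bar{x}_j$ with $\sigma_j=r_j-d(x_j,\bar{x}_j)$. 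Without it you cannot even produce a single exact crossing point: the distance between points in different pieces is an infimum over $A$, and nothing in your argument shows this infimum is attained. So the step you defer with ``I expect to resolve this by a second application of hyperconvexity'' is not a technicality; it is the theorem, and your framework deprives you of the tool that resolves it.

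Worse, that step is not merely unproved but unachievable as you set it up, because you fix $X_1$ once and for all as the side where the common point is to be found. First, a foreign ball need not reach $A$ at all ($d(x_j,A)>r_j$ is possible), so no crossing point with $\sigma_j\ge 0$ exists. Second, even when all balls reach $A$, compatible crossing points may fail to exist: glue two copies of $l_\infty^2$ along the diagonal $A=\{\xi_2=\xi_1\}$ (which is strongly convex), and take $x_1=(2,0)\in X_1$, $x_2=(2,-2)\in X_2$, $x_3=(4,0)\in X_2$, all radii $2$. The gates are $\bar{x}_1=(1,1)$, $\bar{x}_2=(0,0)$, $\bar{x}_3=(2,2)$, and the three balls are pairwise compatible; but for any $z\in X_1$ one has $d(z,x_2)=2+d_1(z,(0,0))$ and $d(z,x_3)=2+d_1(z,(2,2))$, so a common point in $X_1$ would have to be simultaneously $(0,0)$ and $(2,2)$. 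Every common point lies in $X_2$ (for instance $(2,0)\in X_2$), and correspondingly no crossing points $a_2,a_3\in A$ can satisfy your inequality $d(a_2,a_3)\le\sigma_2+\sigma_3$, since $d(x_2,a_2)+d(a_2,a_3)+d(a_3,x_3)\ge d(x_2,\bar{x}_2)+d(\bar{x}_2,\bar{x}_3)+d(\bar{x}_3,x_3)=6>4$. The real work is therefore to show that all such ``bad'' configurations (pairs whose projected balls on $A$ are disjoint, and balls missing $A$) occur in at most one piece $X_{\lambda_0}$, and then to solve the intersection problem \emph{inside that piece}, projecting only the balls centered elsewhere; this is the paper's case analysis (Cases 2 and 3 of Proposition~\ref{prop:strongly_convex}) and it is entirely absent from your sketch. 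Finally, your reduction to two pieces is also gapped at limit stages: an increasing union of hyperconvex spaces need not be hyperconvex (the union of the spaces $l_\infty^n$ is dense in $c_0$, which is not injective), and completeness of the union does not repair this; the paper needs no such reduction, since the gate formalism handles arbitrary index sets $\Lambda$ in one stroke.
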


%In $l_\infty^n$ strongly convex subsets are all subsets obtained by cutting along diagonal sets, i.e. intersections of sets of the form $A = \{x\in l_\infty^n : \sum_{i=1}^n \epsilon_{i} x_i \geq a \}$ for $\epsilon_{i} \in \{ \pm 1 \}, a \in \mathbb{R}$.

Interesting objects that can be obtained by gluing are polyhedral or cubical complexes. One attempt to put a hyperconvex metric on cube complexes is to take a metric on each cube such that they are isometric to the unit cube $[0,1]^n$ in $l_\infty^n$ (see \cite{mai,miesch}). As we see there the gluing of two cubes along some face preserves hyperconvexity. But for $n\geq 2$ proper faces of $n$-cubes are far from being strongly convex. The aim of our second criterion is to characterize such sets in arbitrary hyperconvex spaces and show that gluing along these subsets preserves hyperconvexity in general.

A subset $A$ of a metric space $X$ is called \emph{externally hyperconvex} (cf. \cite{aronszajn}) if for any collection of balls $\{B(x_i,r_i)\}_{i \in I}$ in $X$ with $d(x_i,x_j) \leq r_i+r_j$ and $d(x_i,A)\leq r_i$ we have $A\cap \bigcap_i B(x_i,r_i) \neq \emptyset$. In section~\ref{sec:externally_hyperconvex} we first show that bounded externally hyperconvex subsets share some basic properties of balls in hyperconvex spaces.

\begin{prop}\label{prop:intersection}
	Let $(X,d)$ be a hyperconvex space and $\{A_i\}_{i \in I}$ a family of pairwise intersecting externally hyperconvex subsets such that one of them is bounded. Then $\bigcap_{i \in I} A_i \neq \emptyset$.
\end{prop}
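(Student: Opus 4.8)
The plan is to reduce the statement to two ingredients: a finite Helly-type property (the finite intersection property for the family) and a passage from finite to arbitrary intersections that uses boundedness together with the completeness of hyperconvex spaces. As a warm-up I would record that applying the defining condition to the single ball $B(x,d(x,A))$ shows every externally hyperconvex set is proximinal and closed, and that restricting that condition to balls with centres in $A$ shows an externally hyperconvex subset is in particular hyperconvex as a space; this lets me treat the $A_i$ as ball-like objects with respect to the ambient hyperconvexity.

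For the finite intersection property I would show that any finite subfamily has nonempty intersection, in fact that $A_{i_1}\cap\dots\cap A_{i_n}$ is again externally hyperconvex. The natural template is induction on $n$, but I expect this to be exactly where the difficulty lies: one cannot simply intersect one set at a time, because pairwise intersection of the $A_i$ does not yield nonempty triple intersections, and the obvious auxiliary construction — choosing radii $r_k$ so that balls $B(x_k,r_k)$ around representative points pairwise meet and taking a common point $m$ of these balls via hyperconvexity of $X$ — fails, since such an $m$ need not lie in the sets $A_i$ themselves. The correct argument must therefore thread all the membership constraints through the definition of external hyperconvexity \emph{simultaneously}, realizing ``$x\in A_i$'' and ``$d(x_k,x)\le r_k$'' at once rather than sequentially.

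Granting the finite case, the passage to the full family is comparatively soft. I would fix the bounded member $A_{i_0}$ and consider the finite subintersections $A_{i_0}\cap\bigcap_{i\in F}A_i$ over finite $F\subseteq I$; by the finite case these are nonempty, externally hyperconvex, and bounded (being contained in $A_{i_0}$), and they form a downward-directed family whose total intersection is $\bigcap_{i\in I}A_i$. A Baillon-type completeness result — that a downward-directed family of nonempty bounded externally hyperconvex subsets of a hyperconvex space has nonempty intersection — then finishes the proof. This is exactly the step where boundedness is indispensable: the rays $[n,\infty)\subseteq\mathbb{R}$ are pairwise intersecting externally hyperconvex sets with empty intersection and no bounded member, so some boundedness hypothesis cannot be dropped.

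I expect the main obstacle to be the finite Helly property and, within it, the verification that finite intersections stay externally hyperconvex so that the completeness step applies. Concretely, the heart of the matter is arranging the simultaneous realization of finitely many set-membership and ball constraints: the bookkeeping with the inequalities $d(x_k,x_l)\le r_k+r_l$ and the distances $d(x_k,A_i)$ is routine, but the simultaneity — forcing the point produced by external hyperconvexity of one set to respect all the others at once — is the genuinely delicate part.
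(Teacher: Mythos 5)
Your overall architecture coincides with the paper's: first a finite Helly property (finite intersections are nonempty and again externally hyperconvex), then a passage to the full family using boundedness via the Esp\'inola--Khamsi/Baillon theorem for descending chains (Theorem~\ref{thm:decreasing intersection}) wrapped in a Zorn's lemma argument. Your soft step is essentially right, including the observation that boundedness is indispensable (your example of the rays $[n,\infty)\subset\mathbb{R}$ is a valid witness); note only that the cited result is for chains rather than directed families, which is exactly why the paper runs the Zorn argument over the collection of subfamilies all of whose finite augmentations still have nonempty externally hyperconvex intersection.

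However, there is a genuine gap at precisely the point you flag and then defer: the finite intersection property is never proved. You correctly diagnose that naive induction fails and that the membership constraints must be realized ``simultaneously,'' but you supply no mechanism for doing so --- and that mechanism is the core technical content of the paper. Concretely, Lemma~\ref{lem:key lemma} shows that if $A,A'$ are externally hyperconvex with a common point $y$ and $x$ satisfies $d(x,A),d(x,A')\leq r$, then $A\cap A'\cap B(x,r)\neq\emptyset$; the proof is a zig-zag construction, alternately choosing $a_n\in A$ and $a_n'\in A'$ inside shrinking balls (using external hyperconvexity of each set against balls centered at points of the other) so as to produce two Cauchy sequences at mutual distance tending to zero, whose common limit lies in $A\cap A'\cap B(x,r)$ by closedness. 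Lemma~\ref{lem:intersection of three externally hyperconvex sets} then bootstraps this to three pairwise intersecting sets by another recursive Cauchy-sequence argument (halving the distance to $A_0$ while remaining in $A_1\cap A_2$), and Lemma~\ref{lem:intersection of externally hyperconvex is externally hyperconvex}, together with Lemma~\ref{lem:admissible sets are externally hyperconvex}, upgrades this to the statement that pairwise intersections are externally hyperconvex, whence induction gives Proposition~\ref{prop:finite intersection}. Without these constructions, or some substitute for them, everything after ``granting the finite case'' in your proposal is conditional on a step you have explicitly left open, so the proposal is an outline of the paper's strategy rather than a proof.
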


Using this crucial property we prove that gluing along externally hyperconvex subsets preserves hyperconvexity. This answers a question raised in \cite{piatek}.

\begin{thm}\label{thm:externally_hyperconvex}
	Let $(X,d)$ be the metric space obtained by gluing a family of hyperconvex metric spaces $(X_\lambda,d_\lambda)_{\lambda \in \Lambda}$ along some set $A$ such that $A$ is externally hyperconvex in each $X_\lambda$. Then $X$ is hyperconvex.
	Moreover $A$ is externally hyperconvex in $X$.
\end{thm}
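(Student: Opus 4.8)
The plan is to prove Theorem~\ref{thm:externally_hyperconvex} by directly verifying the hyperconvexity condition for an arbitrary admissible family of closed balls in the glued space $X$. Let me think about what structure the glued space has and how to attack this.

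**Setting up the structure.** First I would recall that when we glue the $X_\lambda$ along $A$, the metric $d$ on $X=\bigsqcup_\lambda X_\lambda / \!\sim$ is given by $d(x,y)=d_\lambda(x,y)$ if $x,y$ lie in a common piece $X_\lambda$, and otherwise $d(x,y)=\min_{a\in A}\bigl(d_\mu(x,a)+d_\nu(a,y)\bigr)$ when $x\in X_\mu$, $y\in X_\nu$ with $\mu\neq\nu$. The key point is that every geodesic path between points of different pieces must pass through $A$, and $A$ is externally hyperconvex in each $X_\lambda$. I would want to verify at the outset that this minimum is attained (so $d$ is well-defined) and that $d$ is genuinely a metric — the triangle inequality here reduces to the fact that each $X_\lambda$ already satisfies it together with the external-hyperconvexity of $A$ guaranteeing nearest-point projections behave well.

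**The core argument.** The heart of the proof is Proposition~\ref{prop:intersection}. Let $\{B(x_i,r_i)\}_{i\in I}$ be a family of closed balls in $X$ with $d(x_i,x_j)\le r_i+r_j$ for all $i,j$; I must produce a point in $\bigcap_i B(x_i,r_i)$. The plan is to partition the index set according to which piece each center $x_i$ lives in, writing $I_\lambda=\{i: x_i\in X_\lambda\}$, with centers in $A$ assigned to every piece. For each $\lambda$ I would consider, inside the hyperconvex space $X_\lambda$, the intersection
\begin{equation}
	C_\lambda \;=\; A\cap\bigcap_{i\in I_\lambda} B_{X_\lambda}\!\bigl(x_i,r_i\bigr),
\end{equation}
where $B_{X_\lambda}$ denotes a ball taken within $X_\lambda$. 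The set $C_\lambda$ is the intersection of an externally hyperconvex set $A$ with balls centered at points of $X_\lambda$, so I expect $C_\lambda$ to itself be externally hyperconvex in $X_\lambda$ (this requires a lemma that intersecting an externally hyperconvex set with a ball preserves external hyperconvexity, which should follow from the ball-covering condition defining external hyperconvexity). To apply Proposition~\ref{prop:intersection} I need two things: that each $C_\lambda$ is nonempty, and that the family $\{C_\lambda\}_\lambda$ is pairwise intersecting. Nonemptiness follows because the condition $d(x_i,A)\le r_i$ must hold for all $i\in I_\lambda$ — otherwise no ball-intersection point could reach the other pieces — and external hyperconvexity of $A$ then yields a point of $A$ in all the balls from $I_\lambda$; pairwise intersection of $C_\lambda$ and $C_\mu$ follows from the compatibility $d(x_i,x_j)\le r_i+r_j$ across pieces, which forces a common point of $A$ meeting all balls from $I_\lambda\cup I_\mu$.

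**Finishing and the main obstacle.** Once the $C_\lambda$ are shown nonempty, externally hyperconvex in $X_\lambda$ (hence, I will argue, externally hyperconvex as subsets of $A$ and ultimately comparable as subsets of $X$), and pairwise intersecting, Proposition~\ref{prop:intersection} delivers a common point $p\in\bigcap_\lambda C_\lambda\subseteq A$. This $p$ lies in every ball $B(x_i,r_i)$: for $i\in I_\lambda$ we have $d(x_i,p)=d_\lambda(x_i,p)\le r_i$ directly, and since $p\in A$ the distance from any center to $p$ is computed correctly regardless of which piece it sits in. I expect the main obstacle to be the bookkeeping around centers whose balls \emph{do not} reach $A$, i.e.\ indices with $d(x_i,A)>r_i$: such a ball is entirely contained in a single piece $X_\lambda$ and interacts with the other pieces only through the triangle inequalities. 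I must handle the case where all relevant centers lie in one piece separately (there hyperconvexity of $X_\lambda$ suffices outright), and otherwise show that the cross-piece compatibility conditions force every ball that must be met to satisfy $d(x_i,A)\le r_i$, legitimizing the reduction to $A$. The final clause, that $A$ remains externally hyperconvex in $X$, I would prove by the same partition scheme, now additionally imposing $d(x_i,A)\le r_i$ from the hypothesis and projecting each center onto $A$ within its own piece before invoking Proposition~\ref{prop:intersection}.
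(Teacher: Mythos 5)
Your reduction to $A$ breaks down in exactly the case you flag as ``the main obstacle,'' and the claim you propose to resolve it with is false. Consider centers spread over several pieces together with a \emph{deep} center, i.e.\ some $x_{i_0}\in X_{\lambda_0}$ with $d(x_{i_0},A)>r_{i_0}$. This configuration is perfectly consistent with the compatibility conditions: glue two copies of $\mathbb{R}$ along $A=\{0\}$, and take $x_1=2\in X_1$ with $r_1=1$, and $x_2=1\in X_2$ with $r_2=2$; then $d(x_1,x_2)=3=r_1+r_2$, the centers lie in different pieces, yet $d(x_1,A)=2>1=r_1$. So it is not true that cross-piece compatibility forces every ball to satisfy $d(x_i,A)\le r_i$. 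What compatibility does force is only that all deep centers lie in a \emph{single} piece $X_{\lambda_0}$: if $x_i,x_j$ were deep and in different pieces, then $d(x_i,x_j)\ge d(x_i,A)+d(x_j,A)>r_i+r_j$. In the mixed case no point of $A$ lies in $B(x_{i_0},r_{i_0})$, so no argument producing a point $p\in A$ can possibly succeed; in the example above the unique common point of the two balls is the point of $X_1$ at distance $1$ from $A$, which is not in $A$. This mixed case cannot be dismissed, and it is precisely the case the whole proof must be built around.

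The paper resolves it by reducing not to $A$ but to the distinguished piece $X_{\lambda_0}$ containing the deep centers (chosen arbitrarily if there are none): it sets $A_i=B(x_i,r_i)\cap X_{\lambda_0}$ for \emph{every} $i$, shows via Lemma~\ref{lem:balls} that each such trace is externally hyperconvex in $X_{\lambda_0}$ --- for $x_i\notin X_{\lambda_0}$ it equals the $(r_i-s_i)$-neighborhood in $X_{\lambda_0}$ of $B^\lambda(x_i,s_i)\cap A$ with $s_i=d(x_i,A)$, which requires Lemma~\ref{lem:exact distance}, Proposition~\ref{prop:transitivity} and Lemma~\ref{lem:externally hyperconvex neighborhood} --- then checks that the $A_i$ intersect pairwise and applies Proposition~\ref{prop:intersection} \emph{inside} $X_{\lambda_0}$, where the common point may legitimately lie off $A$. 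Your treatment of the configuration in which every ball reaches $A$ (and of the ``moreover'' clause) is essentially sound and runs parallel to this machinery, but without the $X_{\lambda_0}$-trace construction the mixed case, which is the actual crux of the theorem, is left unproven.
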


Finally we give an example showing that gluing two spaces $X_1,X_2$ along some subset $A$ such that $A \subset X_1$ is strongly convex and $A \subset X_2$ is externally hyperconvex does not preserve hyperconvexity in general.

%%%%%%%%%%%%%%%%%%%%%%%%%%%%%%%%%%%%%%%%%%%%%%%%%%%%%%%%%%%%%%%%%%%%%%%%%%%

\section{Preliminaries}

%%%%%%%%%%%%%%%%%%%%%%%%%%%%%%%%%%%%%%%%%%%%%%%%%%%%%%%%%%%%%%%%%%%%%%%%%%%

First we fix some notation. Let $(X,d)$ be a metric space. We denote by
\begin{equation*}
	B(x_0,r)= \{x \in X : d(x,x_0) \leq r \}
\end{equation*}
the closed ball of radius $r$ with center in $x_0$. For any subset $A \subset X$ let
\begin{equation*}
	B(A,r)= \{x \in X : d(x,A):= \inf_{y \in A} d(x,y) \leq r \}
\end{equation*}
be the closed $r$-neighborhood of $A$.

\begin{definition}\label{def:gluing}
Let $(X_\lambda,d_\lambda)_{\lambda\in\Lambda}$ be a family of metric spaces with closed subspaces $A_\lambda\subset X_\lambda$. Suppose that all $A_\lambda$ are isometric to some metric space $A$. For every $\lambda\in\Lambda$ fix some isometry $\varphi_\lambda \colon A \to A_\lambda$. We define an equivalence relation on the disjoint union $\bigsqcup_\lambda X_\lambda$ generated by $\varphi_\lambda(a) \sim \varphi_{\lambda'}(a)$ for $a \in A$. The resulting space $X = (\bigsqcup_\lambda X_\lambda)/\sim$ is called the \emph{gluing} of the $X_\lambda$ along $A$.
\end{definition}

$X$ admits a natural metric. For $x \in X_\lambda$ and $y\in X_{\lambda'}$ it is given by
	\begin{equation}
		d(x,y) = \begin{cases}
				d_\lambda(x,y), &\text{ if } \lambda=\lambda', \\
				\inf_{a \in A} \{ d_\lambda(x,\varphi_\lambda(a)) + d_{\lambda'}(\varphi_{\lambda'}(a),y) \}, &\text{ if } \lambda \neq \lambda'.
				\end{cases}
	\end{equation}
For details see for instance Lemma I.5.24 in \cite{bridson}.

Hereinafter, if there is no ambiguity, indices for $d_\lambda$ are dropped and the sets $A_\lambda=\varphi_\lambda(A) \subset X_\lambda$ are identified with $A$.

\begin{definition}\label{def:injective}
A metric space $(X,d)$ is \emph{injective} if for every isometric embedding $\iota \colon A \hookrightarrow Y$ of metric spaces and every 1-lipschitz map $f \colon A \to X$ there is some 1-lipschitz map $\bar f \colon Y \to X$ such that $f = \bar f \circ \iota$.
\end{definition}

Injective metric spaces are complete, geodesic and contractible. Moreover they have the following intersection property.

\begin{definition}\label{def:hyperconvex}
	A metric space $(X,d)$ is \emph{($m$-)hyperconvex} if for any collection $\{ B(x_i,r_i)\}_{i\in J}$ of closed balls with $d(x_i,x_j) \leq r_i + r_j$ (and $|J| \leq m$) we have 
	\begin{equation*}
		\bigcap_{i\in J} B(x_i,r_i) \neq \emptyset.
	\end{equation*}		
\end{definition}

\begin{prop}(Theorem 4 in \cite{aronszajn}).
	A metric space $(X,d)$ is injective if and only if it is hyperconvex.
\end{prop}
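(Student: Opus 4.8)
The plan is to prove the two implications by a one-point extension argument in each direction; the whole result rests on understanding when a single new point can be adjoined to a configuration, and this is where each definition gets used.

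For \emph{injective $\Rightarrow$ hyperconvex}, suppose we are given balls $\{B(x_i,r_i)\}_{i\in I}$ with $d(x_i,x_j)\le r_i+r_j$ and we want a point lying in all of them. I would first normalize the radii by setting $r_i':=\inf_{k\in I}\bigl(r_k+d(x_i,x_k)\bigr)$. Since $r_k\ge 0$ and we may take $k=i$, we get $0\le r_i'\le r_i$, and one checks $\bigcap_i B(x_i,r_i')=\bigcap_i B(x_i,r_i)$ (any $z$ in the right-hand side satisfies $d(z,x_i)\le d(z,x_k)+d(x_k,x_i)\le r_k+d(x_i,x_k)$ for every $k$, hence $d(z,x_i)\le r_i'$). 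The point of this normalization is that the new radii satisfy both $r_i'+r_j'\ge d(x_i,x_j)$ and $|r_i'-r_j'|\le d(x_i,x_j)$, which are exactly the triangle inequalities needed to make $Y:=\{x_i:i\in I\}\cup\{p\}$ a metric space with $d_Y(p,x_i):=r_i'$ and the metric of $X$ on the $x_i$ (passing to the metric quotient should some $r_i'$ vanish). Applying injectivity to the inclusion $\{x_i\}\hookrightarrow Y$ and the $1$-Lipschitz identity $\{x_i\}\to X$ then yields a $1$-Lipschitz extension carrying $p$ to some $q$ with $d(q,x_i)\le r_i'\le r_i$, so $q\in\bigcap_i B(x_i,r_i)$.

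For \emph{hyperconvex $\Rightarrow$ injective}, given an isometric embedding $\iota\colon A\hookrightarrow Y$ and a $1$-Lipschitz map $f\colon A\to X$, I would run a Zorn's lemma argument on the poset of $1$-Lipschitz extensions $(B,g)$ with $\iota(A)\subseteq B\subseteq Y$ extending $f$, ordered by restriction; chains have upper bounds via unions, so a maximal element $(B^*,g^*)$ exists. The heart of the matter is again a one-point extension: if $B^*\ne Y$, pick $y\in Y\setminus B^*$ and consider the balls $B\bigl(g^*(b),d_Y(y,b)\bigr)$ for $b\in B^*$. For any $b,b'$ we have $d\bigl(g^*(b),g^*(b')\bigr)\le d_Y(b,b')\le d_Y(y,b)+d_Y(y,b')$, so this family meets the hypothesis of hyperconvexity and has a common point $q$; setting $g^*(y):=q$ extends $g^*$ $1$-Lipschitzly to $B^*\cup\{y\}$, contradicting maximality. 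Hence $B^*=Y$ and $\bar f:=g^*$ is the required extension.

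I expect the main obstacle to be the injective-to-hyperconvex direction, precisely the verification that the normalized radii $r_i'$ give a \emph{legitimate} metric on the one-point extension. This is where the hypothesis $d(x_i,x_j)\le r_i+r_j$ is genuinely consumed, and where the naive choice $d_Y(p,x_i)=r_i$ breaks down, since it need not satisfy $|r_i-r_j|\le d(x_i,x_j)$. By contrast the hyperconvex-to-injective direction is conceptually routine once the one-point extension is isolated, the only non-elementary ingredient being Zorn's lemma to accommodate an arbitrary target space $Y$.
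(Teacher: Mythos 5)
Your proposal is correct. The paper itself gives no proof of this proposition --- it is stated as a citation of Theorem~4 of Aronszajn--Panitchpakdi --- and your argument is essentially the classical one from that source: the one-point extension with normalized radii $r_i'=\inf_k\bigl(r_k+d(x_i,x_k)\bigr)$ (which is exactly what makes $d_Y(p,x_i):=r_i'$ a legitimate metric, including the degenerate case you handle by the metric quotient, where some $r_i'=0$ forces $x_i$ itself to lie in all the balls) for injective $\Rightarrow$ hyperconvex, and the Zorn's lemma argument with a one-point Lipschitz extension supplied by hyperconvexity for the converse. Both directions as you write them are sound.
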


\begin{lem}\label{lem:multimedian}
	Let $(X,d)$ be a ($3$-)hyperconvex metric space. For $x,y,z \in X$ we have
	\begin{equation*}
		I(x,y)\cap I(y,z) \cap I(z,x) \neq \emptyset.
	\end{equation*}
\end{lem}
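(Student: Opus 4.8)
The plan is to exhibit the common point as an element of the intersection of three suitably chosen balls and then to invoke $3$-hyperconvexity. To this end I would set
\[
\alpha = \tfrac{1}{2}\bigl(d(x,y)+d(x,z)-d(y,z)\bigr), \quad \beta = \tfrac{1}{2}\bigl(d(y,x)+d(y,z)-d(x,z)\bigr), \quad \gamma = \tfrac{1}{2}\bigl(d(z,x)+d(z,y)-d(x,y)\bigr),
\]
the three Gromov products at $x$, $y$ and $z$ respectively. The triangle inequality guarantees $\alpha,\beta,\gamma \geq 0$, so these are admissible radii, and by construction they satisfy the identities $\alpha+\beta = d(x,y)$, $\beta+\gamma=d(y,z)$ and $\gamma+\alpha=d(z,x)$.

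Next I would consider the three balls $B(x,\alpha)$, $B(y,\beta)$, $B(z,\gamma)$. The identities above say precisely that each pairwise distance equals the sum of the corresponding radii, so in particular $d(x,y)\le\alpha+\beta$, $d(y,z)\le\beta+\gamma$ and $d(z,x)\le\gamma+\alpha$; thus the compatibility hypothesis of Definition~\ref{def:hyperconvex} is met for this family of three balls. Applying $3$-hyperconvexity then yields a point $m$ with $d(x,m)\le\alpha$, $d(y,m)\le\beta$ and $d(z,m)\le\gamma$.

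It then remains to verify that $m$ lies in all three intervals. For the pair $x,y$ one has, on the one hand, $d(x,m)+d(m,y)\le\alpha+\beta=d(x,y)$ from the ball constraints, and on the other hand $d(x,m)+d(m,y)\ge d(x,y)$ from the triangle inequality; hence equality holds and $m\in I(x,y)$. The same argument applied to the pairs $y,z$ and $z,x$ gives $m\in I(y,z)$ and $m\in I(z,x)$, which is the assertion. I do not expect a genuine obstacle here: the whole argument is driven by the bookkeeping of the Gromov products, and the only points requiring care are to check that the chosen radii are non-negative and that the equality case of the triangle inequality forces the intersection point into the intervals rather than merely into the balls.
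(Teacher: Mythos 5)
Your proof is correct and follows essentially the same route as the paper: the paper also chooses $\alpha,\beta,\gamma\geq 0$ with $\alpha+\beta=d(x,y)$, $\alpha+\gamma=d(x,z)$, $\beta+\gamma=d(y,z)$ (i.e.\ the Gromov products you wrote explicitly) and observes that $I(x,y)\cap I(y,z)\cap I(z,x)=B(x,\alpha)\cap B(y,\beta)\cap B(z,\gamma)\neq\emptyset$ by $3$-hyperconvexity. You merely spell out the equality-case verification that the paper leaves implicit; no difference in substance.
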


\begin{proof}
	Choose $\alpha, \beta, \gamma \geq 0$ such that
	\begin{align*}
		\alpha + \beta &= d(x,y), \\
		\alpha + \gamma &= d(x,z), \\
		\beta + \gamma &= d(y,z).
	\end{align*}
	Then $I(x,y)\cap I(y,z) \cap I(z,x)= B(x,\alpha) \cap B(y,\beta) \cap B(z,\gamma) \neq \emptyset$.
\end{proof}

\begin{definition}\label{def:proximinal}
	A subset $A$ of a metric space $(X,d)$ is \emph{proximinal} if for all $x\in X$ the intersection $B(x,d(x,A))\cap A$ is non-empty.
\end{definition}

\begin{rmk}
	Proximinal subsets are closed.
\end{rmk}

%%%%%%%%%%%%%%%%%%%%%%%%%%%%%%%%%%%%%%%%%%%%%%%%%%%%%%%%%%%%%%%%%%%%%%%%%%%

\section{Gluing along strongly convex subsets}\label{sec:strongly_convex}

%%%%%%%%%%%%%%%%%%%%%%%%%%%%%%%%%%%%%%%%%%%%%%%%%%%%%%%%%%%%%%%%%%%%%%%%%%%

\begin{definition}\label{def:gated}
	A subset $A$ of a metric space $(X,d)$ is \emph{gated} if for all $x \in X$ there is some $\bar{x} \in A$ such that for all $a\in A$ we have $d(x,a)=d(x,\bar{x})+d(\bar{x},a)$. Clearly if such an $\bar{x}$ exists it is unique and we then call $\bar{x}$ the \emph{gate} of $x$ in $A$.
\end{definition}

\begin{lem}\label{lem:gated}\emph{(cf. Lemma 1.82 in \cite{moezzi}).}
	Let $A$ be a subset of the ($3$-)hyperconvex metric space $(X,d)$. Then $A$ is strongly convex and closed if and only if it is gated.
\end{lem}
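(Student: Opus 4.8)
The plan is to prove both implications, drawing on hyperconvexity only through the multimedian property of Lemma~\ref{lem:multimedian} together with completeness, which is available since the ambient space is (a fortiori) complete. The direction that a gated set is strongly convex and closed needs no essential use of hyperconvexity. For closedness, if $x$ lies in the closure of $A$ with gate $\bar x$, I would pick $a_n \in A$ with $d(x,a_n) \to 0$; the gate equation $d(x,a_n) = d(x,\bar x) + d(\bar x, a_n)$ forces $d(x,\bar x) \le d(x,a_n) \to 0$, so $x = \bar x \in A$. For strong convexity, I would take $x,y \in A$ and $z \in I(x,y)$ with gate $\bar z$; adding the gate equations for $x$ and $y$ and using $z \in I(x,y)$ gives $d(x,y) = 2\,d(z,\bar z) + d(\bar z, x) + d(\bar z, y) \ge 2\,d(z,\bar z) + d(x,y)$, whence $d(z,\bar z)=0$ and $z \in A$.

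For the converse, assume $A$ is strongly convex and closed, fix $x \in X$, and write $r = d(x,A)$. The first goal is to produce a nearest point in $A$. The key observation is that for $p,q \in A$ a median $m$ of $x,p,q$ furnished by Lemma~\ref{lem:multimedian} lies in $I(p,q)$, so strong convexity places $m$ in $A$, while $m \in I(x,p)\cap I(x,q)$ makes it a simultaneous approximate gate for $p$ and $q$. Using this I would build a descending sequence: choose $a_n \in A$ with $d(x,a_n)\to r$, set $p_1 = a_1$, and let $p_{n+1}$ be a median of $x,p_n,a_{n+1}$, which then lies in $A$. Since $p_{n+1}\in I(x,p_n)$, the numbers $d(x,p_n)$ are nonincreasing with $d(p_n,p_{n+1}) = d(x,p_n)-d(x,p_{n+1})$, so these distances telescope and $(p_n)$ is Cauchy; and since $p_{n+1}\in I(x,a_{n+1})$ forces $d(x,p_{n+1})\le d(x,a_{n+1})\to r$, the limiting distance is exactly $r$. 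By completeness and closedness of $A$, the sequence converges to some $\bar x \in A$ with $d(x,\bar x)=r$.

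It then remains to check that such a nearest point is automatically the gate, which is the cleanest step. For an arbitrary $a \in A$, let $m$ be a median of $x,\bar x,a$; strong convexity gives $m \in A$, while $m \in I(x,\bar x)$ gives $d(x,m)+d(m,\bar x) = d(x,\bar x) = r$, so $d(x,m)\le r$. But $m \in A$ forces $d(x,m)\ge r$, hence $d(m,\bar x)=0$ and $m = \bar x$; as $m \in I(x,a)$, I conclude $\bar x \in I(x,a)$, that is $d(x,a) = d(x,\bar x)+d(\bar x,a)$. Since $a$ was arbitrary, $\bar x$ is the gate in the sense of Definition~\ref{def:gated}.

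I expect the main obstacle to be the existence of the nearest point rather than the gate verification: strong convexity delivers medians only for finitely many points at a time, so the real work is in organizing these local medians into a single Cauchy sequence and invoking completeness to pass to the limit. Once a nearest point is in hand, the proof that it is a gate reduces to the short median computation above.
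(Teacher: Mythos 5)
Your proof is correct and follows essentially the same route as the paper's: both directions rest on the median point of Lemma~\ref{lem:multimedian} landing in $A$ by strong convexity, a Cauchy-sequence argument (using completeness and closedness) to produce a nearest point, and then the identical short median computation showing that nearest point satisfies the gate equation, with the converse direction being the same telescoped gate-equation calculation. The only difference is cosmetic: the paper shows the minimizing sequence $x_n$ is itself Cauchy via pairwise medians $m_{n,k}$ of $x,x_n,x_k$, whereas you build a new monotone sequence of recursive medians with telescoping distances -- both are equally valid implementations of the same idea.
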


\begin{proof}
	First assume that $A$ is strongly convex and closed. Fix $x \in X$. Let $x_n$ be a sequence of points in $A$ with $d(x,x_n) \leq d(x,A)+\frac{1}{n}$. For $n,k \in \mathbb{N}$ take $m_{n,k} \in I(x,x_n)\cap I(x,x_k) \cap I(x_n,x_k)$. By strong convexity we get $m_{n,k} \in A$ and hence
\begin{equation*}
	d(x_n,m_{n,k}) = d(x,x_n)-d(x_n,m_{n,k}) \leq d(x,A) + \frac{1}{n} - d(x,A) = \frac{1}{n}.
\end{equation*}
	By interchanging $x_n$ and $x_k$ we also get $d(x_k,m_{n,k}) \leq \frac{1}{k}$. Therefore
\begin{equation*}
	d(x_n,x_k)=d(x_n,m_{n,k})+d(m_{n,k},x_k) \leq \frac{1}{n} + \frac{1}{k},
\end{equation*}
	i.e. $x_n$ is a Cauchy sequence and since $A$ is closed it converges to some $\bar{x} \in A$. Moreover we have $d(x,\bar{x})=d(x,A)$.
	 We claim that $\bar{x}$ is a gate for $x$ in $A$. Let $y \in A$. By Lemma~\ref{lem:multimedian} there is some $z \in I(x,\bar{x})\cap I(\bar{x},y) \cap I(y,x)$. By convexity we have $z\in A$ and therefore $d(x,z) \geq d(x,A)=d(x,\bar{x})$. Since $z\in I(x, \bar{x})$ this implies $z=\bar{x}$ and hence $\bar{x} \in I(x,y)$ as desired. 
	On the other hand, if $A$ is gated for all points $x,y \in A$ and $z \in I(x,y)$ we have $z=\bar{z}$ and hence $I(x,y) \subset A$. Moreover for all $x \in X$ we have $d(x,A)=d(x,\bar{x})$ and therefore $\bar{x} \in B(x,d(x,A)) \cap A$, i.e. $A$ is proximinal and therefore closed.
\end{proof}

\begin{lem}\label{lem:hyperconvex}
	Let $A$ be a gated subset of a ($m$-)hyperconvex metric space $(X,d)$. Then $(A,d)$ is ($m$-)hyperconvex as well.
\end{lem}

\begin{proof}
	Let $\{ B(x_i,r_i) \}_{i\in J}$ be a collection of closed balls in $X$ with $d(x_i,x_j) \leq r_i + r_j$ and centers in $A$. Since $X$ is hyperconvex there is some $z\in \bigcap_{i\in J} B(x_i,r_i)$. Let $\bar{z}$ be the gate of $z$ in $A$. Then $\bar{z} \in \bigcap_{i\in J} B(x_i,r_i)\cap A$ since $d(x_i,\bar{z})=d(x_i,z)-d(z,\bar{z}) \leq r_i$ for all $i \in J$.
\end{proof}

\begin{lem}\label{lem:distance}
	Let $X$ be a metric space obtained by gluing the metric spaces $(X_\lambda,d_\lambda)_{\lambda\in\Lambda}$ along some set $A$. Assume that $A$ is gated in $X_\lambda$ for all $\lambda$. Then for $x\in X_\lambda, y\in X_{\lambda'}$ with $\lambda \neq \lambda'$ we have
	\begin{equation}
		d(x,y)=d(x,\bar{x}) + d(\bar{x},\bar{y}) + d(\bar{y},y).
	\end{equation}
\end{lem}

\begin{proof}
	For $a\in A$ we have
	\begin{align*}
		d(x,a)+d(a,y) &=  d(x,\bar{x}) + d(\bar{x},a) + d(a,\bar{y})+d(\bar{y},y) \\
			&\geq d(x,\bar{x}) + d(\bar{x},\bar{y})+d(\bar{y},y).
	\end{align*}		
\end{proof}

\begin{prop}\label{prop:strongly_convex}
	Let $(X,d)$ be the metric space obtained by gluing the collection $(X_\lambda,d_\lambda)_{\lambda \in \Lambda}$ of ($m$-)hyperconvex metric spaces along some space $A$ such that $A$ is closed and strongly convex in all $X_\lambda$. Then $(X,d)$ is ($m$-)hyperconvex as well.
\end{prop}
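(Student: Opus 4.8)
The plan is to reduce every intersection problem in $X$ to one inside a single suitably chosen factor $X_{\lambda_0}$, using the gate structure supplied by Lemmas~\ref{lem:gated} and~\ref{lem:distance}. For $m\geq 3$ the hypotheses together with Lemma~\ref{lem:gated} show that $A$ is gated in each $X_\lambda$; I write $\bar x$ for the gate of $x$ and note that the individual gate projections assemble into a $1$-Lipschitz retraction $\pi\colon X\to A$ (by Lemma~\ref{lem:distance}, $d(\bar x,\bar y)\leq d(x,y)$), so that $A$ is in fact gated in $X$ itself. The cases $m\leq 2$ are immediate, since $1$-hyperconvexity is vacuous and $X$ is complete and geodesic, being glued from complete geodesic pieces along a closed set.

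So given a family $\{B(x_i,r_i)\}_{i\in J}$ with $|J|\leq m$ and $d(x_i,x_j)\leq r_i+r_j$, I would first partition $J$ according to which factor contains each center, setting $\rho_i=r_i-d(x_i,A)$ and recording the gate $\bar x_i\in A$. The key computation, all coming from Lemma~\ref{lem:distance}, is that for a point $z$ lying in a factor different from that of $x_i$ one has $d(x_i,z)=d(x_i,\bar x_i)+d(\bar x_i,z)$; hence $z\in B(x_i,r_i)$ if and only if $z\in B(\bar x_i,\rho_i)$, and for two centers in different factors $d(\bar x_i,\bar x_j)\leq\rho_i+\rho_j$. Thus, once a target factor $X_{\lambda_0}$ is fixed, every ``external'' ball may be replaced by the ball $B(\bar x_i,\rho_i)$ around its gate, and the task becomes the intersection problem for the internal balls of $X_{\lambda_0}$ together with these gate-balls.

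The decisive step is to choose $X_{\lambda_0}$ so that this reduced family actually satisfies the pairwise condition in $X_{\lambda_0}$. Among the required pairwise inequalities, only one type is not automatic from the displayed identities, namely $d(\bar x_k,\bar x_l)\leq\rho_k+\rho_l$ for two external balls whose centers lie in a common third factor $X_\mu$; if this fails, no common point can lie in $X_{\lambda_0}$ and the solution is forced into $X_\mu$ instead (a ball with $\rho_i<0$ is the degenerate case of the same phenomenon, a ball trapped off $A$). So the heart of the argument is the claim that such obstructions---a trapped ball, or a pair of internal balls whose gate-balls miss each other---can occur in at most one factor. Granting this, I would take $X_{\lambda_0}$ to be that factor (and any factor meeting $J$ if none is obstructed), apply $m$-hyperconvexity of $X_{\lambda_0}$ to the reduced family, and verify via the identity $d(x_i,z)=d(x_i,\bar x_i)+d(\bar x_i,z)$ that the resulting point meets every original ball. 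Finally, factors containing no center may be discarded, since any point of such a factor is dominated by its gate, which reduces an arbitrary $\Lambda$ to the factors meeting $J$.

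I expect the concentration of obstructions in a single factor to be the main obstacle. The cross-factor inequalities $d(\bar x_a,\bar x_b)\leq\rho_a+\rho_b$ do pull the gates of external balls close together, and in the degenerate trapped-ball case they already force all other factors to be unobstructed by a short triangle-inequality computation; but for two genuine pairs of internal balls the analogous configuration of gates is metrically consistent on its own, so the argument cannot rest on the gate data alone. The resolution must invoke the hyperconvexity of the individual factors: the off-$A$ intersection point that an obstructed pair produces in its factor cannot be reconciled, through $A$, with a second such point in another factor. Pinning down this incompatibility---most cleanly, I suspect, by reducing to two factors and arguing directly with the intersection points guaranteed by hyperconvexity of $X_1$ and $X_2$---is where the real work lies.
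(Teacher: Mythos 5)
Your reduction is exactly the one in the paper: gates from Lemma~\ref{lem:gated}, the cross-factor distance formula of Lemma~\ref{lem:distance}, replacement of each external ball $B(x_i,r_i)$ by the gate-ball $B(\bar x_i,\rho_i)$ inside a target factor, and the correct identification of the only two obstructions --- a trapped ball ($\rho_i<0$) or a pair of balls in a common factor whose gate-balls are disjoint --- together with the correct (triangle-inequality) treatment of the trapped-ball case. But the decisive claim, that a genuinely obstructed pair in $X_{\lambda_0}$ forces every pair of balls lying in any other factor to be unobstructed, is exactly what you leave open, and you say so yourself. Since the whole strategy stands or falls with this claim, the proposal has a genuine gap rather than being a complete proof; it reconstructs the paper's scaffolding but not its key step.

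For the record, the paper closes this gap by a short direct argument, and the mechanism differs from the one you conjecture: it never uses the off-$A$ intersection point that the obstructed pair produces in its own factor, nor any contradiction between two such points. Let $x_1,x_2\in X_{\lambda_0}$ have disjoint gate-balls and let $x_3,x_4\in X_\lambda$ with $\lambda\neq\lambda_0$. Set $r=\tfrac{1}{2}\bigl(d(\bar x_1,\bar x_2)-\rho_1-\rho_2\bigr)>0$, so that the enlarged balls $B(\bar x_1,\rho_1+r)$ and $B(\bar x_2,\rho_2+r)$ satisfy the distance condition with equality (they are ``tangent''). Their centers lie in $A\subset X_\lambda$, and one checks that these two balls together with $B(x_3,r_3)$ and $B(x_4,r_4)$ are pairwise compatible in $X_\lambda$; hyperconvexity of $X_\lambda$ then yields a common point $z$. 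Because the two enlarged radii sum exactly to $d(\bar x_1,\bar x_2)$, the point $z$ lies on $I(\bar x_1,\bar x_2)$, hence in $A$ by strong convexity of $A$ in $X_\lambda$. Now the gate identity $d(x_j,z)=d(x_j,\bar x_j)+d(\bar x_j,z)$ for $j=3,4$ gives $d(\bar x_j,z)\leq r_j-d(x_j,\bar x_j)=\rho_j$, so $z\in B(\bar x_3,\rho_3)\cap B(\bar x_4,\rho_4)$ and the pair $(3,4)$ is unobstructed. Note that, contrary to your closing speculation, the obstructed pair enters only through its gate data $(\bar x_1,\bar x_2,\rho_1,\rho_2)$; what breaks the apparent symmetry you worried about is that hyperconvexity and strong convexity are invoked in the \emph{other} factor, where the tangency of the enlarged balls forces the common point into $A$. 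This single idea is what your proposal is missing; everything else matches the published proof.
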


\begin{proof}
	Let $\{ B(x_i,r_i) \}_{i\in I}$ be a collection of closed balls in $X$ with $d(x_i,x_j) \leq r_i + r_j$. We need to show that $\bigcap_{i\in I} B(x_i,r_i)\neq \emptyset$. For any point $x \in X$ denote by $\bar{x}$ its gate in $A$. Moreover if $d(x_i,\bar{x_i}) \leq r_i$ define $\bar{r}_i=r_i-d(x_i,\bar{x_i})$. Observe that $B(\bar{x}_i,\bar{r}_i)\subset B(x_i,r_i)$. We distinguish three cases.
	
	\begin{case}
		First we assume $d(x_i,\bar{x}_i) \leq r_i$ and $B(\bar{x}_i,\bar{r}_i) \cap B(\bar{x}_j,\bar{r}_j) \neq \emptyset$ for all $i,j \in I$.
	\end{case}
	Since $A$ is hyperconvex we have $\bigcap_{i\in I} B(\bar{x}_i,\bar{r}_i) \neq \emptyset$ and hence 
	\begin{equation*}	
	\bigcap_{i\in I} B(x_i,r_i)\neq \emptyset.
	\end{equation*}

	\begin{case}
		Let again be $d(x_i,\bar{x}_i) \leq r_i$ for all $i\in I$ but assume that there are $i,j\in I$ such that 
		\begin{equation}\label{eq:empty}
			B(\bar{x}_i,\bar{r}_i) \cap B(\bar{x}_j,\bar{r}_j) = \emptyset,
		\end{equation}
		i.e. $d(x_i,\bar{x}_i)+d(\bar{x}_i,\bar{x}_j) + d(\bar{x}_j,x_j) > r_i+r_j$.
	\end{case}
	
	Observe that by Lemma~\ref{lem:distance} we have $B(\bar{x}_i,\bar{r}_i) \cap B(\bar{x}_j,\bar{r}_j) \neq \emptyset$ if $x\in X_\lambda$, $y\in X_{\lambda'}$ with $\lambda \neq \lambda'$ and therefore any two $x_i,x_j$ fulfilling \eqref{eq:empty} must be contained in some $X_\lambda$. We now claim that there is only one $X_{\lambda_0}$ containing such pairs.
	
	Let $x_1,x_2 \in X_{\lambda_0}$ be such that 
		\begin{equation*}
			B(\bar{x}_1,\bar{r}_1) \cap B(\bar{x}_2,\bar{r}_2) = \emptyset.
		\end{equation*}
	and $x_3,x_4 \in X_{\lambda}$ for some $\lambda \neq \lambda_0$. Define $r := \frac{d(\bar{x}_1,\bar{x}_2)-\bar{r}_1-\bar{r}_2}{2}$. We have 
	\begin{equation*}
		B(\bar{x}_1,\bar{r}_1+r) \cap B(\bar{x}_2,\bar{r}_2+r)\neq \emptyset
	\end{equation*}
	and therefore since $X_{\lambda}$ is hyperconvex there is some 
	\begin{equation*}	
	z\in B(\bar{x}_1,\bar{r}_1+r) \cap B(\bar{x}_2,\bar{r}_2+r)\cap B(x_3,r_3)\cap B(x_4,r_4).
	\end{equation*}
	But by the choice of $r$ we have $z\in I(x_1,x_2) \subset A$ and thus for $i=3,4$ we get
	\begin{equation*}
	d(\bar{x}_i,z)=d(x_i,z)-d(\bar{x}_i,x_i) \leq r_i-d(\bar{x}_i,x_i)=\bar{r}_i.
	\end{equation*}
	Hence we conclude $z \in B(\bar{x}_3,\bar{r}_3) \cap B(\bar{x}_4,\bar{r}_4) \neq \emptyset$.
	
	Denote $I_0 = \{ i \in I : x_i \in X_{\lambda_0} \}$. Then $\{ B(x_i,r_i) \}_{i\in I_0} \cup \{ B(\bar{x}_i,\bar{r}_i) \}_{i \in I \setminus I_0}$ is a family of pairwise intersecting balls in $X_{\lambda_0}$ and therefore has non-empty intersection $\bigcap_{i\in I_0} B(x_i,r_i) \cap \bigcap_{i \in I \setminus I_0} B(\bar{x}_i,\bar{r}_i)$ what implies $\bigcap_{i\in I} B(x_i,r_i) \neq \emptyset$.
	
	\begin{case}
		It remains the situation where $d(x_i,\bar{x}_i) > r_i$ for some $i\in J$.
	\end{case}	
	
	From Lemma~\ref{lem:distance} it immediately follows that all such $x_i$ are contained in some $X_{\lambda_0}$. Fix some $x_{i_0} \in X_{\lambda_0}$ with $d(x_{i_0},\bar{x}_{i_0}) > r_{i_0}$. Then for $x_i \notin X_{\lambda_0}$ we have $\bar{x}_{i_0} \in B(\bar{x}_i,\bar{r}_i)$ and therefore $B(\bar{x}_i,\bar{r}_i) \cap B(\bar{x}_{j},\bar{r}_{j})\neq \emptyset$ for $x_i,x_j \notin X_{\lambda_0}$. We conclude as above that $\bigcap_{i\in I} B(x_i,r_i) \neq \emptyset$.
\end{proof}

%%%%%%%%%%%%%%%%%%%%%%%%%%%%%%%%%%%%%%%%%%%%%%%%%%%%%%%%%%%%%%%%%%%%%%%%%%%

\section{Gluing along externally hyperconvex subspaces}\label{sec:externally_hyperconvex}

%%%%%%%%%%%%%%%%%%%%%%%%%%%%%%%%%%%%%%%%%%%%%%%%%%%%%%%%%%%%%%%%%%%%%%%%%%%

In the following let $(X,d)$ be a metric space.

\begin{definition}\label{def:admissible}
	A subset $A \subset X$ is called \emph{admissible} if it can be written as an intersection of closed balls $A= \bigcap_{i\in I} B(x_i,r_i)$. We denote the family of all admissible subsets of $X$ by $\mathcal{A}(X)$.
\end{definition}

\begin{rmk}
	A family of pairwise intersecting admissible sets in a hyperconvex space has non-empty intersection.
\end{rmk}

\begin{definition}\label{def:externally hyperconvex}
	A subset $A \subset X$ is called \emph{externally hyperconvex} if for all collections $\{B(x_i,r_i)\}_{i \in I}$ with $d(x_i,x_j) \leq r_i + r_j$ and $d(x_i,A) \leq r_i$ we have $\bigcap_i B(x_i,r_i) \cap A \neq \emptyset$. Denote the set of externally hyperconvex subsets of $X$ by $\mathcal{E}(X)$.
\end{definition}

\begin{rmk}
	Externally hyperconvex subsets are proximinal and therefore closed.
\end{rmk}

First we give the proof of two well known facts for externally hyperconvex subsets.

\begin{lem}\label{lem:admissible sets are externally hyperconvex}
	If $A\in \mathcal{A}(X)$ and $E \in \mathcal{E}(X)$ such that $A \cap E \neq \emptyset$ then $A \cap E \in \mathcal{E}(X)$. Especially if $X$ is hyperconvex we have $\mathcal{A}(X) \subset \mathcal{E}(X)$.
\end{lem}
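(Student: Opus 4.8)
The plan is to realize $A \cap E$ as the intersection forced upon an external-hyperconvexity witness for $E$. First I would write the admissible set explicitly as $A = \bigcap_{j \in J} B(y_j, s_j)$ and fix a basepoint $p \in A \cap E$, which exists by hypothesis. Now suppose $\{B(x_i, r_i)\}_{i \in I}$ is a collection with $d(x_i, x_{i'}) \leq r_i + r_{i'}$ and $d(x_i, A \cap E) \leq r_i$; I must produce a point of $A \cap E \cap \bigcap_i B(x_i, r_i)$. The idea is to enlarge this family by adjoining all the balls $B(y_j, s_j)$ defining $A$ and then apply external hyperconvexity of $E$ to the combined family $\{B(x_i, r_i)\}_{i \in I} \cup \{B(y_j, s_j)\}_{j \in J}$, whose intersection with $E$ automatically lands in $A$.

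It then remains to verify the two hypotheses of Definition~\ref{def:externally hyperconvex} for this combined family. The distance-to-$E$ conditions are immediate: $d(x_i, E) \leq d(x_i, A \cap E) \leq r_i$ because $A \cap E \subset E$, and $d(y_j, E) \leq d(y_j, p) \leq s_j$ because $p \in A \subset B(y_j, s_j)$ while $p \in E$. Among the pairwise inequalities, $d(x_i, x_{i'}) \leq r_i + r_{i'}$ is given, and $d(y_j, y_{j'}) \leq d(y_j, p) + d(p, y_{j'}) \leq s_j + s_{j'}$ holds since $p \in A$. The main obstacle is the mixed inequality $d(x_i, y_j) \leq r_i + s_j$: here I would exploit $d(x_i, A \cap E) \leq r_i$ by choosing, for each $\varepsilon > 0$, a point $q \in A \cap E$ with $d(x_i, q) \leq r_i + \varepsilon$, so that $q \in A \subset B(y_j, s_j)$ yields $d(x_i, y_j) \leq d(x_i, q) + d(q, y_j) \leq r_i + \varepsilon + s_j$; letting $\varepsilon \to 0$ gives the claim. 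The $\varepsilon$-argument is the delicate point, needed because the infimum defining $d(x_i, A \cap E)$ need not be attained (we do not yet know $A \cap E$ is proximinal — that is what we are proving).

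With all hypotheses in place, external hyperconvexity of $E$ supplies a point $z \in E \cap \bigcap_{i} B(x_i, r_i) \cap \bigcap_{j} B(y_j, s_j)$. Since $\bigcap_j B(y_j, s_j) = A$, this $z$ lies in $A \cap E \cap \bigcap_i B(x_i, r_i)$, which shows $A \cap E \in \mathcal{E}(X)$. Finally, for the \emph{especially} statement I would note that when $X$ is hyperconvex the whole space is itself externally hyperconvex, since the conditions $d(x_i, X) = 0 \leq r_i$ are vacuous and hyperconvexity supplies the intersection point. Then every nonempty $A \in \mathcal{A}(X)$ satisfies $A = A \cap X$ with $A \cap X \neq \emptyset$, so the first part applied with $E = X$ gives $A \in \mathcal{E}(X)$.
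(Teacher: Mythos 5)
Your proof is correct and follows essentially the same route as the paper: adjoin the balls defining $A$ to the given family, verify the compatibility conditions, and apply external hyperconvexity of $E$ to the combined collection. The paper states the mixed inequality $d(x_i,y_j)\leq r_i+s_j$ and the distance conditions without justification, so your $\varepsilon$-argument simply fills in details the paper leaves implicit.
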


\begin{proof}
	Since $A$ is admissible there is a collection of balls $\{B(x_i,r_i)\}_{i \in I}$ such that $A= \bigcap_i B(x_i,r_i)$. Now given a family of closed balls $\{B(x_j',r_j')\}_{j \in J}$ with $d(x_j',x_k') \leq r_j' + r_k'$ and $d(x_j',A\cap E) \leq r_j'$ we have $d(x_i,x_j') \leq r_i + r_j'$ and $d(x_i,E) \leq r_i$ and therefore $$A\cap E \cap \bigcap_j B(x_j',r_j') = E \cap \bigcap_i B(x_i,r_i) \cap \bigcap_j B(x_j',r_j') \neq \emptyset$$ since $E$ is externally hyperconvex.
	If $X$ is hyperconvex then $X \in \mathcal{E}(X)$ and therefore $\mathcal{A}(X) \subset \mathcal{E}(X)$.
\end{proof}

\begin{lem}\label{lem:externally hyperconvex neighborhood}
	Let $X$ be hyperconvex and $A \in \mathcal{E}(X)$. Then also $B(A,r) \in \mathcal{E}(X)$.
\end{lem}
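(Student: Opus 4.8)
The plan is to reduce the external hyperconvexity of the neighborhood $B(A,r)$ to that of $A$ itself by absorbing the radius $r$ into the radii of the given balls, and then to recover a point of $B(A,r)$ from a point of $A$ by one final application of the hyperconvexity of $X$. So let $\{B(x_i,r_i)\}_{i\in I}$ be a family with $d(x_i,x_j)\le r_i+r_j$ and $d(x_i,B(A,r))\le r_i$; I must produce a point in $B(A,r)\cap\bigcap_i B(x_i,r_i)$.

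First I would translate the hypothesis on $B(A,r)$ into one on $A$. In any metric space the inequality $d(x_i,B(A,r))\le r_i$ forces $d(x_i,A)\le r_i+r$: for $\varepsilon>0$ choose $y\in B(A,r)$ with $d(x_i,y)\le r_i+\varepsilon$ and then $a'\in A$ with $d(y,a')\le r+\varepsilon$, so $d(x_i,A)\le r_i+r+2\varepsilon$, and let $\varepsilon\to 0$. This step uses only the triangle inequality, no hyperconvexity.

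Next I would inflate every radius by $r$ and invoke the external hyperconvexity of $A$. The family $\{B(x_i,r_i+r)\}_{i\in I}$ still satisfies the compatibility condition, since $d(x_i,x_j)\le r_i+r_j\le (r_i+r)+(r_j+r)$, and by the previous step $d(x_i,A)\le r_i+r$. Hence Definition~\ref{def:externally hyperconvex} applied to $A$ yields a point $a\in A$ with $d(a,x_i)\le r_i+r$ for all $i\in I$. Finally I would feed $a$ back into the hyperconvexity of $X$: the enlarged family $\{B(a,r)\}\cup\{B(x_i,r_i)\}_{i\in I}$ is pairwise intersecting, because $d(x_i,x_j)\le r_i+r_j$ is given and $d(a,x_i)\le r+r_i$ is exactly what the preceding paragraph provides. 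Since $X$ is hyperconvex there is some $w$ in the total intersection; then $d(w,A)\le d(w,a)\le r$ gives $w\in B(A,r)$, while $d(w,x_i)\le r_i$ gives $w\in\bigcap_i B(x_i,r_i)$, as required.

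I expect no serious obstacle here; the approach parallels the radius-absorption trick already used for admissible sets in Lemma~\ref{lem:admissible sets are externally hyperconvex}. The only point needing a little care is the first step, where one must verify that the defining inequality for the neighborhood $B(A,r)$ passes cleanly to the inequality $d(x_i,A)\le r_i+r$ for $A$, so that $A$'s external hyperconvexity can legitimately be invoked with the inflated radii. The essential mechanism is the two-stage use of the intersection property: external hyperconvexity of $A$ to land \emph{in} $A$, followed by hyperconvexity of $X$ to move to a point within distance $r$ of that landing point while staying inside every $B(x_i,r_i)$.
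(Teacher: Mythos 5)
Your proposal is correct and follows essentially the same argument as the paper: deduce $d(x_i,A)\le r_i+r$, apply external hyperconvexity of $A$ to the inflated balls $B(x_i,r_i+r)$ to get a point $a\in A$, then apply hyperconvexity of $X$ to $\{B(a,r)\}\cup\{B(x_i,r_i)\}_{i\in I}$ to land in $B(A,r)\cap\bigcap_i B(x_i,r_i)$. The only difference is that you spell out the triangle-inequality and pairwise-intersection verifications that the paper leaves implicit.
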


\begin{proof}
	Let $\{B(x_i,r_i)\}_{i \in I}$ be a collection of closed balls with $d(x_i,x_j) \leq r_i + r_j$ and $d(x_i,B(A,r)) \leq r_i$. Then we also have $d(x_i,A)\leq r_i+r$. Since $A$ is externally hyperconvex there is some $y\in \bigcap B(x_i,r_i+r) \cap A$. Especially we have $d(x_i,y) \leq r_i + r$ and therefore since $X$ is hyperconvex we get $\emptyset \neq \bigcap B(x_i,r_i) \cap B(y,r) \subset \bigcap B(x_i,r_i) \cap B(A,r)$.
\end{proof}

The following technical lemma turns out to be the initial step in proving Proposition~\ref{prop:intersection}.

\begin{lem}\label{lem:key lemma}
	Let $X$ be a hyperconvex space. Let $A,A' \in \mathcal{E}(X)$ with $y \in A\cap A'\neq \emptyset$ and $x \in X$ with $d(x,A),d(x,A') \leq r$. Denote $d:=d(x,y)$ and $s:=d-r$. Then $A\cap A' \cap B(x,r) \cap B(y,s) \neq \emptyset$, given $s\geq 0$.
	In any case the intersection $A\cap A' \cap B(x,r)$ is non-empty. 
\end{lem}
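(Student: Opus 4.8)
The plan is to reduce the statement to finding a single point lying in both $A$ and $A'$ inside the admissible ``slice'' $D := B(x,r)\cap B(y,s)$, and to produce that point by a limiting construction built from repeated applications of external hyperconvexity.

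I first dispose of the easy reductions. If $s<0$ then $d(x,y)=d<r$, so $y\in A\cap A'\cap B(x,r)$ and the final ``in any case'' assertion is immediate; hence it suffices to treat $s\ge 0$ and to prove the first assertion, the second following by discarding $B(y,s)$. So assume $s\ge 0$, i.e. $d(x,y)=r+s$. Since $A$ is externally hyperconvex, hence proximinal, we have $d(y,A)=0\le s$ and $d(x,A)\le r$, so the family $\{B(x,r),B(y,s)\}$ satisfies the hypotheses of external hyperconvexity for $A$; this yields a point of $C:=A\cap D$, and symmetrically $C':=A'\cap D\ne\emptyset$. As $D$ is an intersection of balls it is admissible, so by Lemma~\ref{lem:admissible sets are externally hyperconvex} both $C$ and $C'$ are nonempty and externally hyperconvex. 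Moreover every $z\in D$ satisfies $d(x,z)+d(z,y)=d(x,y)$ with $d(x,z)=r$, so $\operatorname{diam}D\le 2\min(r,s)$ and the slice is bounded.

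It then remains to show $C\cap C'\ne\emptyset$, since any such point lies in $A\cap A'\cap B(x,r)\cap B(y,s)$. I would build a sequence $p_0,p_1,p_2,\dots$ in $D$ with $p_{2k}\in C$ and $p_{2k+1}\in C'$, where $p_{n+1}$ is a nearest point of the opposite set obtained \emph{without leaving} $D$: if $p_{2k}\in C$, set $\eta:=d(p_{2k},C')$ and apply external hyperconvexity of $A'$ to the family $\{B(x,r),B(y,s),B(p_{2k},\eta)\}$, which is pairwise compatible and satisfies $d(x,A'),d(y,A'),d(p_{2k},A')\le r,s,\eta$ respectively; this produces $p_{2k+1}\in A'\cap D\cap B(p_{2k},\eta)=C'\cap B(p_{2k},\eta)$, and symmetrically on even steps. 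The consecutive distances $\delta_n:=d(p_n,p_{n+1})$ realize the distance of $p_n$ to the opposite set and are non-increasing, since $\delta_{n+1}=d(p_{n+1},\,\text{opposite set})\le d(p_{n+1},p_n)=\delta_n$; hence $\delta_n\to\delta\ge 0$. If $\delta=0$ then $(p_n)$ is Cauchy, and as $X$ is complete with $C,C'$ closed, its limit lies in $C\cap C'$, completing the argument.

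The \textbf{main obstacle} is exactly to exclude $\delta>0$, that is, to force the mutual distance of the alternating projections to collapse. This is the point where external hyperconvexity of \emph{both} $A$ and $A'$, rather than mere hyperconvexity, is essential: two genuinely externally hyperconvex sets that intersect cannot remain a fixed positive distance apart, whereas a subset that is hyperconvex but not externally hyperconvex (such as a diagonal line in $\ell_\infty^2$) violates both the collapse and the lemma. I would prove the collapse using the boundedness of $D$ together with a median-type refinement of the projection step (Lemma~\ref{lem:multimedian}): if $\delta>0$ one places $p_{n+1}$ not merely in $C'\cap B(p_{n},\eta)$ but additionally inside a ball of radius $\eta/2$ about a midpoint of $p_{n}$ and its $C'$-projection, and checks via external hyperconvexity of the \emph{other} set that this stronger constraint remains feasible inside $D$, yielding $d(p_{n+1},\,\text{opposite set})\le\eta/2$ and hence geometric decay. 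Verifying the feasibility of this sharpened step is the technical heart of the proof, and everything else is the routine bookkeeping sketched above.
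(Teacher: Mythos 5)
Your setup is sound as far as it goes: the reduction to $s\geq 0$, the definitions $D:=B(x,r)\cap B(y,s)$, $C:=A\cap D$, $C':=A'\cap D$, the non-emptiness of $C,C'$ via external hyperconvexity of $A,A'$ against the compatible pair $\{B(x,r),B(y,s)\}$ (note $d(x,y)=r+s$ exactly), and the realizability of nearest-point projections between $C$ and $C'$ without leaving $D$ are all correct, and this matches the second half of the paper's argument in spirit. But there is a genuine gap exactly at the step you yourself flag as the ``technical heart'', and on inspection your sketched fix fails. To carry out the sharpened step you would need, after picking a midpoint $m\in D\cap B(p_n,\eta/2)\cap B(q_n,\eta/2)$, a point of $C'$ in $B(m,\eta/2)$ that is additionally within $\eta/2$ of the new $C$-point $p'$; but external hyperconvexity of $A'$ requires $d(p',A')\leq \eta/2$, and the only available estimate is $d(p',q_n)\leq d(p',m)+d(m,q_n)\leq \eta$. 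Every such local move returns separation $\leq\eta$, never $\leq\eta/2$: the hypothesis that $A$ and $A'$ actually meet enters only through the single point $y$, which for $s>0$ does not lie in $D$, so no argument confined to a neighborhood of the current pair can exploit it — your appeal to ``two intersecting externally hyperconvex sets cannot stay a positive distance apart'' is essentially the lemma being proved, hence circular at this point. Separately, your fallback ``$\delta=0$ implies $(p_n)$ is Cauchy'' is false as stated: $d(p_n,p_{n+1})\to 0$ does not imply Cauchy without summable (e.g.\ geometric) decay, which is precisely what the missing step was supposed to provide.

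The paper closes this gap with a non-local construction your proposal lacks: a staircase anchored at $y$. For each $0<l\leq s$ it builds alternating points $a_n\in B(y,nl)\cap B(x,d-nl)\cap B(a_{n-1}',l)\cap A$ and $a_n'\in B(y,nl)\cap B(x,d-nl)\cap B(a_n,l)\cap A'$, walking from the common point $y$ toward $x$ in steps of size $l$; each application of external hyperconvexity is feasible because the needed distance bound to $A$ (resp.\ $A'$) comes from the immediately preceding point of the other set, and after $\lfloor s/l\rfloor$ steps one lands in $D$ with a pair $a\in C$, $a'\in C'$ satisfying $d(a,a')\leq l$ — pairs at \emph{arbitrarily small} separation, which is exactly what your alternating projections cannot manufacture. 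With this claim in hand the halving recursion does work: given $a_{n-1},a_{n-1}'$ at distance $\leq 2^{-n}$, hyperconvexity of $X$ yields $x_n\in B(a_{n-1},2^{-(n+1)})\cap B(a_{n-1}',2^{-(n+1)})\cap B(x,r-2^{-(n+1)})$, and the claim is re-applied \emph{from scratch} to $x_n$ and $y$ (its hypotheses $d(x_n,A),d(x_n,A')\leq 2^{-(n+1)}$ hold by construction, and the walk again starts at $y$), producing a new pair in $B(y,s)\cap B(x_n,2^{-(n+1)})\subset D$ at distance $\leq 2^{-(n+1)}$ with displacement $d(a_{n-1},a_n)\leq 2^{-n}$; this geometric control gives Cauchy sequences with a common limit in $A\cap A'\cap B(x,r)\cap B(y,s)$. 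In short: your bookkeeping agrees with the paper's, but the decisive ingredient — the global walk from the anchor $y$ that converts the mere existence of one common point into pairs of arbitrarily small separation inside the slice — is missing, and no purely local refinement of the projection step can replace it.
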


\begin{proof}
	For $s \leq 0$ we have $y \in A \cap A' \cap B(x,r)$ Therefore let us assume $s > 0$.
	\begin{cl}
		For each $0 < l \leq s$ there are $a \in A, a'\in A'$ such that $d(a,a') \leq l$ and $a,a' \in B(x,r) \cap B(y,s)$.
	\end{cl}	
	We start choosing $$a_1 \in B(y,l) \cap B(x,d-l) \cap A$$ and $$a_1' \in B(y,l) \cap B(x,d-l) \cap B(a_1,l) \cap A'.$$ Then we inductively take $$a_n \in B(y,nl) \cap B(x,d-nl) \cap B(a_{n-1}',l) \cap A$$ and $$a_n' \in B(y,nl) \cap B(x,d-nl) \cap B(a_n,l) \cap A'$$ as long as $n \leq \lfloor \frac{s}{l} \rfloor =: n_0$. Finally there are $$a \in B(y,s) \cap B(x,r) \cap B(a_{n_0}',l) \cap A$$ and $$a' \in B(y,s) \cap B(x,r) \cap B(a,l) \cap A'$$ as desired.
	
	We now construct recursively two converging sequences $(a_n)_n\subset A$ and $(a_n')_n \subset A'$ such that $a_n,a_n' \in B(x,r) \cap B(y,s)$ with $$d(a_n,a_n') \leq \frac{1}{2^{n+1}} \text{ and } d(a_{n-1},a_n), d(a_{n-1}',a_n') \leq \frac{1}{2^n}.$$
	
	First choose $a_0,a_0' \in B(x,r) \cap B(y,s)$ with $d(a_0,a_0')\leq \frac{1}{2}$ according to the claim. Given $a_{n-1},a_{n-1}'$ with $d(a_{n-1},a_{n-1}') \leq \frac{1}{2^n}$ there is some $x_n \in B(a_{n-1},\frac{1}{2^{n+1}}) \cap B(a_{n-1}',\frac{1}{2^{n+1}}) \cap B(x,r-\frac{1}{2^{n+1}})$. Now applying the claim to $x_n$ and $y$ we find $$a_n,a_n' \in B(y,s) \cap B(x_n, \tfrac{1}{2^{n+1}}) \subset B(y,s) \cap B(x,r)$$ with $d(a_n,a_n') \leq \frac{1}{2^{n+1}}$. Moreover we have $$d(a_{n-1}, a_n) \leq d(a_{n-1},x_n) + d(x_n,a_n) \leq \frac{1}{2^{n+1}} + \frac{1}{2^{n+1}} = \frac{1}{2^n}.$$
	
	For $m\geq n$ we get $$d(a_n,a_m)\leq \sum_{k=n+1}^m d(a_{k-1},a_k)\leq \sum_{k=n+1}^\infty \frac{1}{2^k}=\frac{1}{2^n}$$ and similarly for $a_n'$. Hence the two sequences converge and since $d(a_n,a_n') \to 0$ they have a common limit point $a \in B(y,s)\cap B(x,r)\cap A \cap A'$.
\end{proof}

\begin{lem}\label{lem:intersection of three externally hyperconvex sets}
	Let $X$ be a hyperconvex space and let $A_0,A_1,A_2 \in \mathcal{E}(X)$ be pairwise intersecting externally hyperconvex subspaces. Then $A_0 \cap A_1 \cap A_2 \neq \emptyset$.
\end{lem}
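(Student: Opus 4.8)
The plan is to follow the two-phase template of Lemma~\ref{lem:key lemma}. First choose points $p_0 \in A_1 \cap A_2$, $p_1 \in A_0 \cap A_2$ and $p_2 \in A_0 \cap A_1$, and by Lemma~\ref{lem:multimedian} a median $m \in I(p_0,p_1) \cap I(p_1,p_2) \cap I(p_2,p_0)$. Writing $r_i = d(m,p_i)$ we have $d(p_i,p_j) = r_i + r_j$, and since each $A_i$ contains the two points $p_j,p_k$ with $j,k \neq i$, also $d(m,A_i) \leq \min_{j \neq i} r_j$. As externally hyperconvex subsets are proximinal and hence closed, it suffices to construct a Cauchy sequence $(x_n)_n$ with $d(x_n,A_i) \to 0$ simultaneously for $i=0,1,2$; its limit then lies in $A_0 \cap A_1 \cap A_2$.

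The engine is a three-set analogue of the Claim inside Lemma~\ref{lem:key lemma}. Given a point $x$ with $d(x,A_i) \leq \delta$ for all $i$ and a prescribed $l > 0$, I would produce points $b_i \in A_j \cap A_k$ (with $\{i,j,k\} = \{0,1,2\}$) that are pairwise within $l$ and stay in a controlled ball about $x$. This is obtained by a stepping/chain construction: repeatedly invoke Lemma~\ref{lem:key lemma} on the three pairs $(A_i,A_j)$, which meet at $p_k$, to hop between the pairwise intersections in increments of size $l$ while keeping consecutive points within $l$, exactly as in the two-set chain. Taking $x_{n+1}$ to be the median (Lemma~\ref{lem:multimedian}) of such a near-coincident triple $b_0^{(n)},b_1^{(n)},b_2^{(n)}$ produced with center $x_n$ and $l = \delta_n/4$, one gets $d(x_{n+1},A_i) \leq \delta_{n+1} := \delta_n/4$ together with $d(x_n,x_{n+1})$ summable, so that $(x_n)_n$ is Cauchy and its limit is common to the three closed sets.

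The main obstacle is to produce genuine contraction, i.e. to manufacture the pairwise-close triples. A single median of points chosen in the three pairwise intersections does not suffice: if $b_i \in A_j \cap A_k$ lie in $B(x,\delta)$ and $m'$ is their median with $s_i = d(m',b_i)$, then $d(m',A_i) \leq \min_{j \neq i} s_j$, while the only available constraints $s_i + s_j = d(b_i,b_j) \leq 2\delta$ force merely the middle value among the $s_i$ to be $\leq \delta$; hence $\max_i d(m',A_i)$ need not drop below $\delta$. One cannot repair this by averaging, since closed balls in a hyperconvex space are not interval-convex---already in $\ell_\infty^2$ a point of $I(u,v)$ may leave a ball containing both $u$ and $v$. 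This is exactly the difficulty that forced the chain in Lemma~\ref{lem:key lemma}, and the same device of driving the three points arbitrarily close \emph{before} averaging is what yields the contraction here; the real work is the bookkeeping needed to keep all three sets, rather than two, under simultaneous control along the chain.
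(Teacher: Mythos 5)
Your reduction (closedness of the $A_i$ plus a Cauchy sequence with $\max_i d(x_n,A_i)\to 0$) and your diagnosis that a single median cannot contract are both correct, but the proof has a genuine gap exactly at its engine: the claim that a chain of applications of Lemma~\ref{lem:key lemma} produces triples $b_0\in A_1\cap A_2$, $b_1\in A_0\cap A_2$, $b_2\in A_0\cap A_1$ that are \emph{pairwise within $l$} for prescribed $l\ll\delta$. To hop from a current point $b\in A_i\cap A_j$ to a point of $A_j\cap A_k$ at distance $\leq l$, Lemma~\ref{lem:key lemma} needs $d(b,A_j)\leq l$ (true, it is $0$) \emph{and} $d(b,A_k)\leq l$; but $d(b,A_k)$ is only of order $\delta$, which is precisely the quantity you are trying to shrink. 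The two-set chain inside Lemma~\ref{lem:key lemma} is self-sustaining only because it starts at a point $y\in A\cap A'$ where the two sets genuinely meet, and the alternation keeps each new point within $l$ of a point of the opposite set. For the three pairwise intersections there is no admissible starting point: a point of $A_i\cap A_j$ that is $l$-close to $A_k$ would already yield your triple by one application of the key lemma, so your engine presupposes (up to the easy median iteration) essentially the statement being proved. Note also that you cannot sidestep this by treating $A_i\cap A_j$ as externally hyperconvex via Lemma~\ref{lem:intersection of externally hyperconvex is externally hyperconvex}, since in the paper that lemma is a consequence of the present one.

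What is missing is a genuine contraction mechanism, and the paper obtains one by a different route: keep the iterate $x_n$ \emph{inside} $A_1\cap A_2$ and halve only the single quantity $d(x_n,A_0)$. Concretely, with $r=d(x_0,A_0)$ one finds $y_0\in A_0\cap A_1\cap B(x_0,r)$ and then, using that $A_0\cap B(y_0,r)$ is externally hyperconvex (Lemma~\ref{lem:admissible sets are externally hyperconvex}), a point $z_0\in A_0\cap A_2\cap B(x_0,r)\cap B(y_0,r)$; the key step is to apply external hyperconvexity of $A_0$ \emph{directly} to the balls $B(x_0,r)$, $B\bigl(y_0,\tfrac r2\bigr)$, $B\bigl(z_0,\tfrac r2\bigr)$ (legitimate because $y_0,z_0\in A_0$ and $d(y_0,z_0)\leq r$), producing $\bar x_0\in A_0$ with $d(\bar x_0,A_1)\leq\tfrac r2$ and $d(\bar x_0,A_2)\leq\tfrac r2$; one application of Lemma~\ref{lem:key lemma} at $\bar x_0$ then returns a point $x_1\in A_1\cap A_2$ with $d(x_1,A_0)\leq\tfrac r2$ and $d(x_0,x_1)\leq\tfrac r2$. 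This ``midpoint inside $A_0$'' step is the idea your sketch lacks, and it is not bookkeeping: without it neither the chain nor the median produces any decay.
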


\begin{proof}
	Choose some point $x_0 \in A_1 \cap A_2$ and let $r := d(x_0,A_0)$. By the previous lemma there is $y_0 \in A_0 \cap A_1 \cap B(x_0,r)$. Define $A_0' := A_0 \cap B(y_0,r) \in \mathcal{E}(X)$. Using again the lemma we have $A_0'\cap A_2 = A_0 \cap A_2 \cap B(y_0,r) \neq \emptyset$ and therefore there is some $z_0 \in A_0' \cap A_2 \cap B(x_0,r)=A_0 \cap A_2 \cap B(x_0,r)\cap B(y_0,r)$. Then since $A_0$ is externally hyperconvex, there is some $\bar{x}_0 \in B(x_0,r) \cap B(y_0, \frac{r}{2}) \cap B(z_0,\frac{r}{2})\cap A_0$ and using again Lemma~\ref{lem:key lemma}, we find $x_1 \in A_1 \cap A_2 \cap B(\bar{x}_0,\frac{r}{2})\cap B(x_0,\frac{r}{2})$. Proceeding this way we get some sequence $(x_n)_n \subset A_1 \cap A_2$ with $d(x_n, A_0) \leq \frac{r}{2^n}$ and $d(x_{n-1},x_n)\leq \frac{r}{2^n}$. Hence $d(x_n,x_m)\leq \sum_{k=n+1}^m \frac{r}{2^k} \leq \frac{r}{2^n}$ and therefore $(x_n)_n$ converges to $x \in A_0 \cap A_1 \cap A_2$.
\end{proof}

\begin{lem}\label{lem:intersection of externally hyperconvex is externally hyperconvex}
	If $A_0,A_1 \in \mathcal{E}(X)$ and $A_0 \cap A_1 \neq \emptyset$ then  $A_0 \cap A_1 \in \mathcal{E}(X)$.
\end{lem}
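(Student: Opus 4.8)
The plan is to deduce external hyperconvexity of $C := A_0 \cap A_1$ directly from the three-set result Lemma~\ref{lem:intersection of three externally hyperconvex sets}, with no further recursive construction. Concretely, let $\{B(x_i,r_i)\}_{i \in I}$ be any family of closed balls satisfying $d(x_i,x_j) \leq r_i + r_j$ and $d(x_i, C) \leq r_i$; I must produce a point of $C \cap \bigcap_{i} B(x_i,r_i)$. If $I = \emptyset$ there is nothing to prove beyond the hypothesis $C \neq \emptyset$, so I would assume $I \neq \emptyset$ and write $B := \bigcap_{i \in I} B(x_i,r_i)$. The goal then becomes $A_0 \cap A_1 \cap B \neq \emptyset$, and I would obtain this by checking that $A_0$, $A_1$, $B$ form a triple of pairwise intersecting externally hyperconvex sets.

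First I would record that $B$ is a legitimate member of the triple. Each single ball $B(x_i,r_i)$ is admissible, and the compatibility condition $d(x_i,x_j) \leq r_i+r_j$ makes the family pairwise intersecting; hence by the Remark following Definition~\ref{def:admissible} their intersection $B$ is non-empty, and being admissible it lies in $\mathcal{E}(X)$ by Lemma~\ref{lem:admissible sets are externally hyperconvex}. The decisive observation is the monotonicity of distance: since $C = A_0 \cap A_1 \subseteq A_0$ we get $d(x_i,A_0) \leq d(x_i,C) \leq r_i$ for every $i$, and likewise $d(x_i,A_1) \leq r_i$. Feeding the family $\{B(x_i,r_i)\}$ into the external hyperconvexity of $A_0$ (respectively $A_1$) yields $A_0 \cap B \neq \emptyset$ (respectively $A_1 \cap B \neq \emptyset$), while $A_0 \cap A_1 = C \neq \emptyset$ holds by hypothesis.

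With all three pairwise intersections non-empty, Lemma~\ref{lem:intersection of three externally hyperconvex sets} gives $A_0 \cap A_1 \cap B \neq \emptyset$, that is $C \cap \bigcap_{i \in I} B(x_i,r_i) \neq \emptyset$, which is exactly the defining property of external hyperconvexity for $C$. This closes the argument.

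I do not expect a genuine obstacle here: the entire analytic difficulty has already been absorbed into Lemma~\ref{lem:key lemma} and the three-set Lemma~\ref{lem:intersection of three externally hyperconvex sets}. The only points demanding care are the bookkeeping that reduces the two pairwise conditions $A_0 \cap B \neq \emptyset$ and $A_1 \cap B \neq \emptyset$ to the external hyperconvexity of $A_0$ and $A_1$ via $C \subseteq A_0, A_1$, together with the trivial but necessary separation of the empty-index case. One should also resist the temptation to invoke Proposition~\ref{prop:intersection}, which would be circular, since that statement is proved \emph{from} this lemma together with the three-set lemma.
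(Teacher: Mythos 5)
Your proof is correct and is essentially the paper's own argument: both form the admissible set $B=\bigcap_i B(x_i,r_i)$, note it is externally hyperconvex by Lemma~\ref{lem:admissible sets are externally hyperconvex}, verify the pairwise intersections with $A_0$ and $A_1$ via $d(x_i,A_k)\leq d(x_i,A_0\cap A_1)\leq r_i$, and conclude with Lemma~\ref{lem:intersection of three externally hyperconvex sets}. Your write-up merely makes explicit some bookkeeping (the empty-index case, non-emptiness of $B$) that the paper leaves implicit.
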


\begin{proof}
	Let $B(x_i,r_i)$ be a collection of balls with $d(x_i,x_j) \leq r_i + r_j$ and $d(x_i,A_1 \cap A_2) \leq r_i$. Define $A:= \bigcap B(x_i,r_i)$. Since the $A_k$ are externally hyperconvex we have $A \cap A_k = \bigcap_i B(x_i,r_i)\cap A_k \neq \emptyset$ and since admissible sets are externally hyperconvex we have $A_0 \cap A_1\cap A \neq \emptyset$ by Lemma~\ref{lem:intersection of three externally hyperconvex sets}.
\end{proof}

By induction we therefore get the following proposition.

\begin{prop}\label{prop:finite intersection}
	Let $A_0, \ldots, A_n \in \mathcal{E}(X)$ with $A_i\cap A_j \neq \emptyset$ then $\emptyset \neq \bigcap_{k=0}^n A_k \in \mathcal{E}(X)$.
\end{prop}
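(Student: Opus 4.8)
The plan is to argue by induction on $n$, collapsing two of the sets into a single externally hyperconvex set at each step so that the size of the family drops by one. The two preceding lemmas furnish exactly the two ingredients this requires: Lemma~\ref{lem:intersection of externally hyperconvex is externally hyperconvex} keeps us inside the class $\mathcal{E}(X)$ when we intersect two members, while Lemma~\ref{lem:intersection of three externally hyperconvex sets} guarantees that the collapsed set still meets all the remaining sets, so that the pairwise-intersection hypothesis survives the reduction.

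For the base case $n=1$ the hypothesis gives $A_0 \cap A_1 \neq \emptyset$, and Lemma~\ref{lem:intersection of externally hyperconvex is externally hyperconvex} shows this intersection lies in $\mathcal{E}(X)$. For the inductive step I would assume the claim for every pairwise intersecting family $A_0,\ldots,A_{n-1}$ of externally hyperconvex sets, and then take pairwise intersecting $A_0,\ldots,A_n \in \mathcal{E}(X)$. Set $C := A_0 \cap A_1$; by hypothesis $C \neq \emptyset$, and Lemma~\ref{lem:intersection of externally hyperconvex is externally hyperconvex} gives $C \in \mathcal{E}(X)$. The family $C, A_2, \ldots, A_n$ can be re-indexed as $n$ externally hyperconvex sets $B_0,\ldots,B_{n-1}$, to which the induction hypothesis applies once its pairwise intersections are checked.

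The verification of that pairwise-intersection hypothesis is the one genuinely substantive point, and hence the step I expect to be the crux. The intersections $A_i \cap A_j$ for $2 \le i < j \le n$ are non-empty by assumption. For each $j \in \{2,\ldots,n\}$ the sets $A_0, A_1, A_j$ are pairwise intersecting members of $\mathcal{E}(X)$, so Lemma~\ref{lem:intersection of three externally hyperconvex sets} yields $C \cap A_j = A_0 \cap A_1 \cap A_j \neq \emptyset$. Thus $C, A_2, \ldots, A_n$ is indeed pairwise intersecting, and the induction hypothesis gives $\emptyset \neq C \cap \bigcap_{k=2}^n A_k = \bigcap_{k=0}^n A_k \in \mathcal{E}(X)$, which completes the step.

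Everything else is routine bookkeeping: the count of sets after collapsing $A_0,A_1$ into $C$ drops exactly to the $n$-set case covered by the hypothesis, the base case is immediate, and membership in $\mathcal{E}(X)$ is handed to us by Lemma~\ref{lem:intersection of externally hyperconvex is externally hyperconvex} at both the collapsing step and the final intersection. The only place where the hyperconvexity of $X$ is really doing work beyond two-fold intersections is through Lemma~\ref{lem:intersection of three externally hyperconvex sets}, which is precisely why the induction closes.
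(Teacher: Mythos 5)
Your proof is correct and matches the paper's approach: the paper derives this proposition with the single remark ``by induction'' from Lemma~\ref{lem:intersection of three externally hyperconvex sets} and Lemma~\ref{lem:intersection of externally hyperconvex is externally hyperconvex}, and the induction you spell out---collapsing $A_0\cap A_1$ into one externally hyperconvex set and using the three-set lemma to preserve the pairwise-intersection hypothesis---is exactly the intended argument, just written out in full.
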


As a consequence of Baillon's theorem on the intersection of hyperconvex spaces \cite{baillon} the following theorem was proven by Esp\'inola and Khamsi in \cite{espinola}.

\begin{thm}\label{thm:decreasing intersection}
	\cite[Theorem 5.4]{espinola}. Let $\{A_i\}_{i \in I}$ be a descending chain of non-empty externally hyperconvex subsets of a bounded hyperconvex space $X$. Then $\bigcap_i A_i$ is non-empty and externally hyperconvex in $X$.
\end{thm}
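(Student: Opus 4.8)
The plan is to reduce everything to Baillon's theorem \cite{baillon}, which guarantees that a descending chain of non-empty bounded hyperconvex metric spaces has non-empty (and hyperconvex) intersection. The only preliminary observation I need is that an externally hyperconvex subset $E \subseteq X$ is, as a metric subspace, hyperconvex: given balls with centers $x_i \in E$ satisfying $d(x_i,x_j) \leq r_i + r_j$, we automatically have $d(x_i, E) = 0 \leq r_i$, so external hyperconvexity yields $E \cap \bigcap_i B(x_i,r_i) \neq \emptyset$, which is precisely the defining intersection property for $E$. Since $X$ is bounded, each $A_i$ is therefore a non-empty bounded hyperconvex space, and the $A_i$ form a chain; applying Baillon's theorem directly gives that $A := \bigcap_{i \in I} A_i \neq \emptyset$.

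For the external hyperconvexity of $A$, I would take a test family $\{B(x_j, r_j)\}_{j \in J}$ with $d(x_j, x_k) \leq r_j + r_k$ and $d(x_j, A) \leq r_j$, and set $B := \bigcap_{j \in J} B(x_j, r_j) \in \mathcal{A}(X)$; the goal is $A \cap B \neq \emptyset$. Because $A \subseteq A_i$ we have $d(x_j, A_i) \leq d(x_j, A) \leq r_j$ for every $i$, so external hyperconvexity of $A_i$ gives that $C_i := A_i \cap B$ is non-empty. By Lemma~\ref{lem:admissible sets are externally hyperconvex} each $C_i$, being the intersection of the externally hyperconvex set $A_i$ with the admissible set $B$, is itself externally hyperconvex, hence hyperconvex by the opening observation and bounded since $X$ is. Moreover the $C_i$ inherit the chain ordering of the $A_i$, since $A_{i'} \subseteq A_i$ forces $C_{i'} \subseteq C_i$.

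A second application of Baillon's theorem to the descending chain $\{C_i\}$ of non-empty bounded hyperconvex spaces then gives $\bigcap_{i \in I} C_i \neq \emptyset$. Since
\begin{equation*}
	\bigcap_{i \in I} C_i = \bigcap_{i \in I}(A_i \cap B) = \Big(\bigcap_{i \in I} A_i\Big) \cap B = A \cap B,
\end{equation*}
this is exactly the desired conclusion $A \cap B \neq \emptyset$, establishing that $A$ is externally hyperconvex. I note that the non-emptiness of $A$ is in fact the degenerate case $J = \emptyset$ (where $B = X$ and $C_i = A_i$) of this same argument, so the two assertions of the theorem are really one.

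The main obstacle I anticipate is not constructive but lies in hypothesis-checking: I must ensure that Baillon's theorem genuinely applies at each step, i.e. that the sets involved are hyperconvex as metric spaces (handled by the opening observation), bounded (inherited from $X$), non-empty (from external hyperconvexity of the $A_i$ together with the estimate $d(x_j, A_i) \leq r_j$), and totally ordered by inclusion (inherited from the chain $\{A_i\}$). The remaining content is the routine bookkeeping identity $\bigcap_i (A_i \cap B) = A \cap B$, which is immediate.
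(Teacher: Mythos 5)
Your proof is correct: the reduction of external hyperconvexity to hyperconvexity, the two applications of Baillon's theorem (first to the chain $\{A_i\}$ for non-emptiness, then to the chain $C_i = A_i \cap B$ with $B$ admissible, using Lemma~\ref{lem:admissible sets are externally hyperconvex} to see each $C_i$ is externally hyperconvex and hence hyperconvex), and the identity $\bigcap_i C_i = A \cap B$ all check out. The paper itself states this theorem as a citation of Esp\'inola--Khamsi without reproducing a proof, noting only that it is a consequence of Baillon's theorem, and your argument is exactly that route, so there is nothing to flag.
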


Similarly to Corollary 8 in \cite{baillon} we can deduce the following corollary which implies Proposition~\ref{prop:intersection}.

\begin{cor}
	Let $\{A_i\}_{i \in I}$ be a family of pairwise intersecting externally hyperconvex subsets of a bounded hyperconvex space $X$. Then $\bigcap_i A_i$ is non-empty and externally hyperconvex in $X$.
\end{cor}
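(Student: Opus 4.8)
The plan is to well-order the family and construct $\bigcap_i A_i$ by transfinite recursion, using Theorem~\ref{thm:decreasing intersection} to pass through limit ordinals and Lemma~\ref{lem:intersection of three externally hyperconvex sets} to pass through successor ordinals. Before that, I would reduce the external hyperconvexity (the ``moreover'') to the non-emptiness statement. Indeed, given balls $\{B(x_j,r_j)\}_{j\in J}$ with $d(x_j,x_k)\le r_j+r_k$ and $d(x_j,\bigcap_i A_i)\le r_j$, the admissible set $A:=\bigcap_j B(x_j,r_j)$ is non-empty since $X$ is hyperconvex, hence externally hyperconvex by Lemma~\ref{lem:admissible sets are externally hyperconvex}. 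Moreover $d(x_j,A_i)\le d(x_j,\bigcap_i A_i)\le r_j$ forces $A\cap A_i\neq\emptyset$ for every $i$, so $\{A\}\cup\{A_i\}_{i\in I}$ is again a pairwise intersecting family of externally hyperconvex subsets of the bounded space $X$. Applying the non-emptiness statement to this enlarged family yields $A\cap\bigcap_i A_i\neq\emptyset$, which is exactly the external hyperconvexity of $\bigcap_i A_i$. Thus it suffices to prove $\bigcap_i A_i\neq\emptyset$.

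For the non-emptiness, fix a well-ordering $\{A_\alpha\}_{\alpha<\kappa}$ of the family and define a descending chain $(D_\alpha)_{\alpha<\kappa}$ by $D_0=A_0$, $D_{\alpha+1}=D_\alpha\cap A_{\alpha+1}$, and $D_\lambda=\bigcap_{\beta<\lambda}D_\beta$ at limit ordinals, so that $D_\alpha=\bigcap_{\beta\le\alpha}A_\beta$. Along the construction I would carry the invariant that each $D_\alpha$ is non-empty and externally hyperconvex and, crucially, that $D_\alpha\cap A_\gamma\neq\emptyset$ for \emph{every} $\gamma<\kappa$. Granting the invariant, the full intersection $\bigcap_{\alpha<\kappa}D_\alpha$ is the intersection of a descending chain of non-empty externally hyperconvex subsets of the bounded hyperconvex space $X$, so Theorem~\ref{thm:decreasing intersection} makes it non-empty; since $D_\alpha\subseteq A_\alpha$ it coincides with $\bigcap_i A_i$.

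Verifying the invariant is the core of the argument and splits by ordinal type. At a successor step, $D_{\alpha+1}=D_\alpha\cap A_{\alpha+1}$ is non-empty by the invariant for $D_\alpha$ (with $\gamma=\alpha+1$) and externally hyperconvex by Lemma~\ref{lem:intersection of externally hyperconvex is externally hyperconvex}; to re-establish the invariant I would apply Lemma~\ref{lem:intersection of three externally hyperconvex sets} to the triple $D_\alpha,A_{\alpha+1},A_\gamma$, which is pairwise intersecting precisely because of the invariant for $D_\alpha$ together with the pairwise-intersection hypothesis, giving $D_{\alpha+1}\cap A_\gamma=D_\alpha\cap A_{\alpha+1}\cap A_\gamma\neq\emptyset$. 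At a limit step $\lambda$, the chain $\{D_\beta\}_{\beta<\lambda}$ is descending, so $D_\lambda$ is non-empty and externally hyperconvex by Theorem~\ref{thm:decreasing intersection}; and for each fixed $\gamma$ the sets $D_\beta\cap A_\gamma$ form a descending chain of non-empty (invariant) externally hyperconvex (Lemma~\ref{lem:intersection of externally hyperconvex is externally hyperconvex}) sets whose intersection equals $D_\lambda\cap A_\gamma$, again non-empty by Theorem~\ref{thm:decreasing intersection}.

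I expect the main obstacle to be exactly the bookkeeping of this invariant. The descending-chain theorem alone secures non-emptiness of a chain's limit, but a naive transfinite intersection can stall at a successor step where $D_\alpha$ fails to meet the next set $A_{\alpha+1}$. Carrying the stronger invariant ``$D_\alpha$ meets every member of the family'' is what makes each successor step go through via the three-set lemma, and it survives limit ordinals only through a second application of Theorem~\ref{thm:decreasing intersection} to the auxiliary chains $D_\beta\cap A_\gamma$. The boundedness hypothesis enters solely to legitimize these repeated invocations of the Esp\'inola--Khamsi theorem.
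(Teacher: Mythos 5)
Your overall strategy is sound and genuinely different in presentation from the paper's: the paper runs a Zorn's Lemma argument on the family $\mathcal{F}$ of index sets $J\subset I$ such that $\bigcap_{i\in J\cup F}A_i$ is non-empty and externally hyperconvex for every finite $F\subset I$ (that ``for all finite $F$'' clause is the paper's version of your invariant, and it makes the maximality step trivial, since $J_0\cup\{i\}\in\mathcal{F}$ follows by absorbing $i$ into $F$), whereas you well-order the family and recurse, using Lemma~\ref{lem:intersection of three externally hyperconvex sets} and Lemma~\ref{lem:intersection of externally hyperconvex is externally hyperconvex} at successor steps and Theorem~\ref{thm:decreasing intersection} at limit steps, exactly where the paper uses Proposition~\ref{prop:finite intersection} and Theorem~\ref{thm:decreasing intersection}. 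The two arguments are morally equivalent (transfinite recursion versus a maximal element), and your invariant that $D_\alpha$ meets \emph{every} $A_\gamma$ is precisely the right bookkeeping to make the recursion go through.

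However, your recursion as literally written has a concrete defect at limit ordinals: with $D_{\alpha+1}=D_\alpha\cap A_{\alpha+1}$ and $D_\lambda=\bigcap_{\beta<\lambda}D_\beta$, the set $A_\lambda$ is never intersected in, for any limit ordinal $\lambda$. Indeed, by induction $D_\lambda=\bigcap_{\beta<\lambda}A_\beta$ and then $D_{\lambda+1}=D_\lambda\cap A_{\lambda+1}$ skips $A_\lambda$, so your claimed identity $D_\alpha=\bigcap_{\beta\le\alpha}A_\beta$ fails at limits and $\bigcap_{\alpha<\kappa}D_\alpha$ may be strictly larger than $\bigcap_{i}A_i$ (it omits $A_\lambda$ for every limit $\lambda<\kappa$), so its non-emptiness does not conclude the proof. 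The repair is immediate within your own framework: define $D_\lambda=A_\lambda\cap\bigcap_{\beta<\lambda}D_\beta$; then $E_\lambda:=\bigcap_{\beta<\lambda}D_\beta$ is non-empty and externally hyperconvex by Theorem~\ref{thm:decreasing intersection}, your auxiliary chains $D_\beta\cap A_\gamma$ give $E_\lambda\cap A_\gamma\neq\emptyset$ for all $\gamma$ (in particular for $\gamma=\lambda$, so $D_\lambda\neq\emptyset$), and Lemma~\ref{lem:intersection of three externally hyperconvex sets} applied to $E_\lambda,A_\lambda,A_\gamma$ restores the invariant for $D_\lambda$. Two smaller remarks: your opening reduction of the ``moreover'' clause to bare non-emptiness is correct but redundant, since the final application of Theorem~\ref{thm:decreasing intersection} to the chain $(D_\alpha)_{\alpha<\kappa}$ already yields that the intersection is externally hyperconvex; and note that external hyperconvexity of the intermediate sets $D_\alpha$ must be carried along in the invariant in any case (as you do), because Theorem~\ref{thm:decreasing intersection} requires it at every limit stage.
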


\begin{proof}
	Consider the set
	\begin{align*}
		\mathcal{F}= \left\{ J \subset I : \forall F \subset I \text{ finite,} \bigcap_{i \in J\cup F} A_i \neq \emptyset \text{ is externally hyperconvex}\right\}.
	\end{align*}
	By Proposition~\ref{prop:finite intersection} clearly $\emptyset \in \mathcal{F}$. Considering a chain $J_k \in \mathcal{F}$ and some finite set $F\subset I$, the sets $A_{J_k} = \bigcap_{i \in J_k \cup F} A_i$ build a decreasing chain of non-empty externally hyperconvex sets. Define $J=\bigcup_k J_k$. We have $A = \bigcap_{i \in J \cup F} A_i = \bigcap_k A_{J_k}$ is non-empty and externally hyperconvex by Theorem~\ref{thm:decreasing intersection}. Therefore $J \in \mathcal{F}$ is an upper bound of $J_k$. Hence $\mathcal{F}$ satisfies the hypothesis of Zorn's Lemma and therefore there is some maximal element $J_0 \in \mathcal{F}$. But for $i \in I$ we have $J_0 \cup \{i\} \in \mathcal{F}$ and by maximality of $J_0$ we conclude $I=J_0 \in \mathcal{F}$.
\end{proof}

%\begin{prop}\label{prop:countable intersection}
%	Let $X$ be a bounded hyperconvex space and $\{A_i\}_{i \in \mathbb{N}} \subset \mathcal{E}(X)$ a countable family of pairwise intersecting externally hyperconvex subsets. Then $\bigcap_{i \in \mathbb{N}} A_i$ is non-empty and externally hyperconvex in $X$.
%\end{prop}
%
%\begin{proof}
%	The sets $B_n = \bigcap_{i=1}^n A_i$ are a descending chain of nonempty externally hyperconvex subsets by Proposition~\ref{prop:finite intersection}. Therefore by Theorem~\ref{thm:countable intersection} $\bigcap_i A_i = \bigcap_n B_n \neq \emptyset$ is externally hyperconvex.
%\end{proof}

%\begin{proof}[Proof of Proposition~\ref{prop:intersection}.]
%	Let $A_{i_0}$ be bounded. Since $X$ is a separable metric space it is Lindel\"of. Therefore if $\bigcap_i A_i = \emptyset$ there is a countable subfamily $\{A_{i_n}\}_{n \in \mathbb{N}}$ with $\bigcap_{n \in \mathbb{N}} A_{i_n} = \emptyset$. But this also implies that $\{ A_{i_0} \cap A_{i_n} \}_{n\in \mathbb{N}}$ is a countable family of pairwise intersecting externally hyperconvex subsets of the bounded hyperconvex space $A_{i_0}$, contradicting Proposition~\ref{prop:countable intersection}.
%\end{proof}

\begin{prop}\label{prop:transitivity}
	Let $Y$ be an externally hyperconvex subset of the metric space $X$. Moreover let $A$ be externally hyperconvex in $Y$. Then $A$ is also externally hyperconvex in $X$.
\end{prop}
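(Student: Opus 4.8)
The plan is to verify the defining condition of Definition~\ref{def:externally hyperconvex} for $A$ in $X$ by transporting the whole problem into $Y$, where the machinery of Section~\ref{sec:externally_hyperconvex} is available. Two preliminary observations make $Y$ a convenient workspace. First, $Y$ is itself hyperconvex as a metric space: for balls $\{B(y_i,\rho_i)\}$ with centres $y_i \in Y$ and compatible radii one has $d(y_i,Y)=0\le\rho_i$, so external hyperconvexity of $Y$ in $X$ puts a point of $Y$ in their intersection; in particular $Y$ is complete and, $A$ being closed, $A$ is complete. Second, if $C\subseteq Y$ is externally hyperconvex in $X$, then $C$ is externally hyperconvex in $Y$, since every ball of $Y$ is the restriction to $Y$ of a ball of $X$ with the same centre.

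Now let $\{B(x_i,r_i)\}_{i\in I}$ be a family in $X$ with $d(x_i,x_j)\le r_i+r_j$ and $d(x_i,A)\le r_i$; as $A\subseteq Y$ we also have $d(x_i,Y)\le r_i$. Put $E_i:=Y\cap B(x_i,r_i)$. Each $E_i$ is non-empty by external hyperconvexity of $Y$ in $X$, lies in $\mathcal{E}(X)$ by Lemma~\ref{lem:admissible sets are externally hyperconvex}, hence in $\mathcal{E}(Y)$ by the observation above, and $E_i\subseteq B(x_i,r_i)$ is bounded. For $\epsilon>0$ the neighbourhood $B(A,\epsilon)$ lies in $\mathcal{E}(Y)$ by Lemma~\ref{lem:externally hyperconvex neighborhood}. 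I will apply Proposition~\ref{prop:intersection} inside $Y$ to the family $\{E_i\}_{i\in I}\cup\{B(A,\epsilon)\}$, one of whose members, $E_{i_0}$, is bounded.

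For this I must check pairwise intersections. That $E_i\cap E_j=Y\cap B(x_i,r_i)\cap B(x_j,r_j)\neq\emptyset$ follows again from external hyperconvexity of $Y$ in $X$ applied to the two compatible balls. Granting the remaining intersections $E_i\cap B(A,\epsilon)\neq\emptyset$, Proposition~\ref{prop:intersection} produces a non-empty set $C_\epsilon:=B(A,\epsilon)\cap\bigcap_i E_i\in\mathcal{E}(Y)$. As $\epsilon\downarrow0$ these form a descending chain of non-empty externally hyperconvex subsets of the bounded hyperconvex space $E_{i_0}$ (using once more that $C_\epsilon\subseteq E_{i_0}$ is externally hyperconvex in $E_{i_0}$), so Theorem~\ref{thm:decreasing intersection} gives $\bigcap_{\epsilon>0}C_\epsilon\neq\emptyset$. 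Since $\bigcap_{\epsilon>0}B(A,\epsilon)=A$, this intersection lies in $A\cap\bigcap_i E_i=A\cap\bigcap_i B(x_i,r_i)$, which is therefore non-empty, as required.

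The crux, and the step I expect to be the main obstacle, is the pairwise condition $E_i\cap B(A,\epsilon)\neq\emptyset$, that is $d(E_i,A)=0$: the points of $Y$ lying within $r_i$ of $x_i$ must come arbitrarily close to $A$. For a single ball this asserts that $d(x,A)\le r$ with $x\in X$ forces points of $Y\cap B(x,r)$ near $A$, and it cannot be settled by naively projecting $x_i$ onto $Y$: because $Y$ need not be gated, distances to $Y$ are not additive, and a projection inflates the radii so that the reduced balls $B(\bar x_i,r_i-d(x_i,\bar x_i))$ violate the compatibility $d(\bar x_i,\bar x_j)\le(r_i-d(x_i,\bar x_i))+(r_j-d(x_j,\bar x_j))$. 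Instead I would establish it by an iterative approximation in the spirit of Lemma~\ref{lem:key lemma} and Lemma~\ref{lem:intersection of three externally hyperconvex sets}: starting from a projection $p\in Y$ of $x$ with $d(x,p)=d(x,Y)\le r$ and the nearest point of $A$ to $p$, one alternately invokes external hyperconvexity of $Y$ in $X$ (to stay inside $Y\cap B(x,r)$) and of $A$ in $Y$ (to move toward $A$), selecting at each stage a point in the intersection of two half-radius balls so as to halve the distance to $A$; completeness of $A$ then yields a limit in $Y\cap B(x,r)$ at distance $0$ from $A$. Securing the geometric contraction while preserving the constraint $d(\cdot,x)\le r$ — precisely the delicate point in Lemma~\ref{lem:intersection of three externally hyperconvex sets} — is where the real work lies.
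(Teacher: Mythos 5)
Your architecture is essentially the paper's: the paper likewise restricts the balls to $Y$, setting $A_i := B(x_i,r_i)\cap Y$, notes these lie in $\mathcal{E}(X)$ by Lemma~\ref{lem:admissible sets are externally hyperconvex} and hence in $\mathcal{E}(Y)$, checks $A_i\cap A_j\neq\emptyset$ via external hyperconvexity of $Y$, and applies Proposition~\ref{prop:intersection} inside $Y$ --- except that the paper simply adds $A$ itself to the family, asserting $A_i\cap A\neq\emptyset$ as ``clear''. Your instinct that this intersection condition is the sensitive point is sound (in the boundary case $d(x_i,A)=r_i$ it is not immediate, and the paper glosses over it), and replacing $A$ by the neighborhoods $B(A,\epsilon)$ is a reasonable device. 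But as written your proof has a genuine hole exactly at its self-declared crux: you never prove $E_i\cap B(A,\epsilon)\neq\emptyset$, only sketch an iterative halving scheme ``in the spirit of Lemma~\ref{lem:key lemma}''. No such iteration is needed --- the step is one line. Given $\epsilon>0$, pick $a\in A$ with $d(x_i,a)\le r_i+\epsilon$ and apply external hyperconvexity of $Y$ in $X$ to the two compatible balls $B(x_i,r_i)$ and $B(a,\epsilon)$, noting $d(x_i,Y)\le d(x_i,A)\le r_i$ and $d(a,Y)=0$; this yields $y\in Y\cap B(x_i,r_i)$ with $d(y,A)\le d(y,a)\le\epsilon$. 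Your worry about non-gatedness concerns projecting the \emph{center} $x_i$ onto $Y$; the trick is instead to import a near-optimal point of $A$ as a second ball center, which costs nothing.

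A second, smaller misstep: Proposition~\ref{prop:intersection} asserts only non-emptiness of the intersection, so it does not deliver $C_\epsilon\in\mathcal{E}(Y)$, which you need in order to invoke Theorem~\ref{thm:decreasing intersection}. The corollary following Theorem~\ref{thm:decreasing intersection}, which does produce external hyperconvexity of such intersections, is stated for \emph{bounded} hyperconvex ambient spaces, and $Y$ need not be bounded; repairing this inside $E_{i_0}$ would force you to verify triple intersections such as $E_i\cap E_{i_0}\cap B(A,\epsilon)$, which your pairwise data do not cover. The cleanest repair deletes the whole $\epsilon\downarrow 0$ layer: apply Proposition~\ref{prop:intersection} in $Y$ once, to the family $\{E_i\}_{i\in I}\cup\{B(A,\epsilon)\}_{\epsilon>0}$. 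All members lie in $\mathcal{E}(Y)$ (using Lemma~\ref{lem:externally hyperconvex neighborhood} for the neighborhoods), all pairwise intersections hold --- the mixed ones by the two-ball argument above, and $B(A,\epsilon)\cap B(A,\epsilon')\supseteq A$ --- and $E_{i_0}$ is bounded. Since $A$ is proximinal, hence closed, in $Y$, we have $\bigcap_{\epsilon>0}B(A,\epsilon)=A$, so the resulting point lies in $A\cap\bigcap_i B(x_i,r_i)$. With these two fixes your argument is correct and, unlike the paper's, treats the case $d(x_i,A)=r_i$ explicitly.
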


\begin{proof}
	Let $\{B(x_i,r_i)\}_{i \in I}$ be a collection of closed balls with $d(x_i,x_j) \leq r_i +r_j$ and $d(x_i,A) \leq r_i$. Then the sets $A_i:= B(x_i,r_i)\cap Y$ are externally hyperconvex subsets of $X$ and therefore also of $Y$. Clearly $A_i \cap A \neq \emptyset$ and since $Y$ is externally hyperconvex we have $A_i\cap A_j = B(x_i,r_i) \cap B(x_j,r_j) \cap Y \neq \emptyset$. Therefore we have a collection of pairwise intersecting externally hyperconvex subsets of $Y$ and hence by Proposition~\ref{prop:intersection} $A\cap \bigcap_i B(x_i,r_i) = A \cap \bigcap_i A_i \neq \emptyset$.
\end{proof}

Before proving Theorem~\ref{thm:externally_hyperconvex} we need some last technical lemmas. We use the convention that $B^\lambda(x,r)$ denotes the closed ball inside $X_\lambda$.

\begin{lem}\label{lem:exact distance}
	Let $X$ be a metric space obtained by gluing the hyperconvex spaces $X_\lambda$ along some set $A$ with $A \in \mathcal{E}(X_\lambda)$. Let $x \in X_\lambda$ and $y \in X_{\lambda'}$ with $\lambda \neq \lambda'$. Then for $s=d(x,A)$ there is some $a \in A\cap B^\lambda(x,s)$ such that $$d(x,y)=d(x,a)+d(a,y).$$	
\end{lem}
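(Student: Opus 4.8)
The plan is to transfer the whole problem onto the hyperconvex space $A$ itself. Write $s = d(x,A)$, $s' = d(y,A)$, $D = d(x,y)$ and $t = D - s$; since $d(x,a) \ge s$ and $d(a,y) \ge s'$ for every $a \in A$, taking the infimum in the gluing formula gives $D \ge s + s'$, so $t \ge s' \ge 0$. As $a$ ranges over $A$ we have $d(x,a)+d(a,y) \ge D$, with equality exactly on the minimizers, so finding the asserted gate amounts to producing $a \in A$ with $d(x,a) = s$ and $d(a,y) = t$. First I would record the two relevant externally hyperconvex subsets of $A$: the gate set $A' := A \cap B^\lambda(x,s)$, which is non-empty since $A$ is proximinal in $X_\lambda$ and lies in $\mathcal{E}(X_\lambda)$ by Lemma~\ref{lem:admissible sets are externally hyperconvex}, and the sublevel sets $A_r := A \cap B^{\lambda'}(y,r)$ for $r \ge s'$, which lie in $\mathcal{E}(X_{\lambda'})$ for the same reason.

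The key structural step is a \emph{restriction principle}: if $S \subseteq A \subseteq Y$ and $S \in \mathcal{E}(Y)$, then $S \in \mathcal{E}(A)$, because any test family of balls centred in $A$ is automatically a test family in $Y$ and the resulting intersection point already lies in $S \subseteq A$. Applied with $Y = X_\lambda$ and with $Y = X_{\lambda'}$, this shows that both $A'$ and each $A_r$ are externally hyperconvex in the hyperconvex space $A$ itself; this is precisely what lets me work inside a single space even though the two constraints originate in different glued pieces. Next I would pin down $\inf_{a \in A'} d(a,y)$. It is at least $t$, since $d(x,a)=s$ on $A'$ forces $d(a,y) \ge D - s = t$. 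For the reverse bound I take, given $\eta > 0$, a point $\tilde a \in A$ with $d(x,\tilde a)+d(\tilde a,y) < D + \eta$ and project it onto $A'$ by applying external hyperconvexity of $A$ in $X_\lambda$ to the balls $B^\lambda(x,s)$ and $B^\lambda(\tilde a,\,d(x,\tilde a)-s)$, obtaining $a' \in A'$ with $d(\tilde a,a') \le d(x,\tilde a)-s$ and hence $d(a',y) < t + \eta$. Consequently $C_r := A' \cap A_r$ is non-empty for every $r > t$, and being an intersection of two externally hyperconvex subsets of $A$ it lies in $\mathcal{E}(A)$ by Lemma~\ref{lem:intersection of externally hyperconvex is externally hyperconvex}.

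Finally I would attain the infimum. The sets $C_{t+1/n}$ form a descending chain of non-empty externally hyperconvex subsets whose intersection is exactly $\{a \in A' : d(a,y)=t\}$, the gates I am after. Fixing $a_0 \in A'$ and $R := (t+1) + d(a_0,y)$, every $a \in C_{t+1}$ satisfies $d(a,a_0) \le d(a,y)+d(a_0,y) \le R$, so all $C_{t+1/n}$ sit inside the bounded hyperconvex space $\hat A := A \cap B(a_0,R)$, where they remain externally hyperconvex by the restriction principle; Theorem~\ref{thm:decreasing intersection} then yields $\bigcap_n C_{t+1/n} \neq \emptyset$. Any point $a$ of this intersection satisfies $d(x,a)=s$ and $d(a,y)=t$, so $a \in A \cap B^\lambda(x,s)$ and $d(x,y) = d(x,a)+d(a,y)$, as required. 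The main obstacle is exactly this attainment: the constraint $d(x,\cdot)=s$ is felt only in $X_\lambda$ while $d(\cdot,y)$ is felt only in $X_{\lambda'}$, so no single ambient space sees both, and a naive minimizing sequence need not be Cauchy because near-minimizers of $d(\cdot,y)$ on $A'$ can be far apart. The restriction principle resolves the first difficulty by pushing everything into $A$, and the Baillon-type descending-chain result of Theorem~\ref{thm:decreasing intersection} supplies the compactness needed to pass to the limit.
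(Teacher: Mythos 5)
Your proof is correct, and its first half coincides with the paper's: the projection step (applying external hyperconvexity of $A$ in $X_\lambda$ to the balls $B^\lambda(x,s)$ and $B^\lambda(\tilde a,\,d(x,\tilde a)-s)$) is exactly how the paper reduces the gluing infimum to the gate set $A'=A\cap B^\lambda(x,s)$, i.e.\ $d(x,y)=s+d(A',y)$. Where you genuinely diverge is the attainment of this infimum. The paper pushes $A'$ into the other piece: $A'\in\mathcal{E}(A)$ by Lemma~\ref{lem:admissible sets are externally hyperconvex} (together with the restriction principle you state explicitly and the paper leaves implicit), hence $A'\in\mathcal{E}(X_{\lambda'})$ by Proposition~\ref{prop:transitivity}, and then proximinality of externally hyperconvex sets --- external hyperconvexity tested on the single ball $B^{\lambda'}(y,d(A',y))$ --- produces the minimizer in one line. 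You instead never leave $A$: you form the descending chain $C_{t+1/n}=A'\cap A_{t+1/n}$ of non-empty sets in $\mathcal{E}(A)$ via Lemma~\ref{lem:intersection of externally hyperconvex is externally hyperconvex}, trap it inside a bounded hyperconvex ball $\hat A$, and invoke Theorem~\ref{thm:decreasing intersection}. Both routes are sound and rest on comparable machinery, but the dependencies differ: the paper's argument is shorter, yet Proposition~\ref{prop:transitivity} is itself proved via Proposition~\ref{prop:intersection}, i.e.\ the Zorn's-lemma statement for arbitrary pairwise intersecting families, whereas your argument uses only finite intersections plus the countable descending-chain theorem --- a leaner logical footprint, paid for by the explicit boundedness reduction $\hat A=A\cap B(a_0,R)$, which the paper's route never needs. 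One point worth making explicit in your write-up: $A$ is itself hyperconvex (balls centred in $A$ satisfy $d(\cdot,A)=0\le r_i$, so external hyperconvexity of $A$ in $X_\lambda$ applies), which you need both to apply Lemma~\ref{lem:intersection of externally hyperconvex is externally hyperconvex} inside $A$ and to know that $\hat A$ is a bounded hyperconvex space.
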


\begin{proof}
	Define $A'=A\cap B^\lambda(x,s) \neq \emptyset$. Observe that for $a \in A$ there is some $a' \in B^\lambda(x,s) \cap B^\lambda(a,d(a,x)-s) \cap A$ and hence $$d(x,a)+d(a,y) = d(x,a') + d(a',a) + d(a,y) \geq d(x,a')+d(a',y).$$ Therefore
	\begin{equation}
		d(x,y) = \inf_{a\in A'} d(x,a)+d(a,y) = s + d(A',y)
	\end{equation}
	By Lemma~\ref{lem:admissible sets are externally hyperconvex} and Proposition~\ref{prop:transitivity} we have $A' \in \mathcal{E}(X_{\lambda'})$. Thus there is some $a\in A'\cap B^{\lambda'}(y,d(A',y))$ and we get $d(x,y)=d(x,a)+d(a,y)$.
\end{proof}

\begin{lem}\label{lem:balls}
	Let $X$ be a metric space obtained by gluing the hyperconvex spaces $X_\lambda$ along some set $A$ with $A \in \mathcal{E}(X_\lambda)$. Let $x \in X_\lambda$ and $r \geq s:= d(x,A)$. Then for $\lambda_0 \neq \lambda$ we have 
	\begin{equation}
		B(x,r)\cap X_{\lambda_0} = B^{\lambda_0}(B^\lambda(x,s)\cap A,r-s).
	\end{equation}	
	Moreover $B(x,r)\cap X_{\lambda_0} \in \mathcal{E}(X_{\lambda_0})$.
\end{lem}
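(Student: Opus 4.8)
The plan is to prove the set equality first and then read off external hyperconvexity from it. Throughout I write $A' := A \cap B^\lambda(x,s)$. Since externally hyperconvex subsets are proximinal, $A$ is proximinal in $X_\lambda$ and hence $A' \neq \emptyset$. Note also that every $a \in A'$ satisfies $d(x,a) = s$: the condition $a \in A$ forces $d(x,a) \geq d(x,A) = s$, while $a \in B^\lambda(x,s)$ forces $d(x,a) \leq s$. This exact value of $d(x,a)$ is what makes the radius bookkeeping come out cleanly.

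For the inclusion $B(x,r) \cap X_{\lambda_0} \subseteq B^{\lambda_0}(A',r-s)$, I would take $y \in X_{\lambda_0}$ with $d(x,y) \leq r$ and apply Lemma~\ref{lem:exact distance} (with $\lambda' = \lambda_0$) to obtain $a \in A \cap B^\lambda(x,s) = A'$ with $d(x,y) = d(x,a) + d(a,y)$. Since $d(x,a) = s$, this gives $d(a,y) = d(x,y) - s \leq r - s$, so $y \in B^{\lambda_0}(a,r-s) \subseteq B^{\lambda_0}(A',r-s)$. The reverse inclusion is just the triangle inequality for the glued metric: if $y \in X_{\lambda_0}$ and $d(a,y) \leq r-s$ for some $a \in A'$, then $d(x,y) \leq d(x,a) + d(a,y) \leq s + (r-s) = r$, so $y \in B(x,r) \cap X_{\lambda_0}$.

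For the moreover part I would show $A' \in \mathcal{E}(X_{\lambda_0})$ and then invoke the neighborhood lemma. Since $B^\lambda(x,s)$ is admissible and $A \in \mathcal{E}(X_\lambda)$, Lemma~\ref{lem:admissible sets are externally hyperconvex} gives $A' = A \cap B^\lambda(x,s) \in \mathcal{E}(X_\lambda)$. External hyperconvexity then passes down to any subspace containing $A'$: because $A' \subseteq A \subseteq X_\lambda$ and $A$ carries the restricted metric, testing a family of balls centred in $A$ against $A'$ reduces directly to the same test in $X_\lambda$, whence $A'$ is externally hyperconvex in $A$. As $A \in \mathcal{E}(X_{\lambda_0})$, Proposition~\ref{prop:transitivity} now yields $A' \in \mathcal{E}(X_{\lambda_0})$. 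Finally, since $X_{\lambda_0}$ is hyperconvex, Lemma~\ref{lem:externally hyperconvex neighborhood} gives $B^{\lambda_0}(A',r-s) \in \mathcal{E}(X_{\lambda_0})$ (with the degenerate case $r = s$ giving $B^{\lambda_0}(A',0) = A'$), and by the set equality this is exactly $B(x,r) \cap X_{\lambda_0}$.

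I expect the main point to be the transfer of external hyperconvexity from $X_\lambda$ to $X_{\lambda_0}$, which cannot be done directly but must be routed through the common subspace $A$ via Proposition~\ref{prop:transitivity}; the small technical step legitimizing this routing is the observation that an externally hyperconvex subset of the ambient space $X_\lambda$ remains externally hyperconvex in any intermediate subspace $A'\subseteq A\subseteq X_\lambda$. This is precisely the chain already carried out inside the proof of Lemma~\ref{lem:exact distance}, so I would either reuse that argument or cite it rather than repeat the verification.
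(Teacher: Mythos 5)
Your proof is correct and takes essentially the same route as the paper: the set equality via Lemma~\ref{lem:exact distance} for the forward inclusion and the triangle inequality for the reverse one, then external hyperconvexity of $B^\lambda(x,s)\cap A$ in $X_{\lambda_0}$ via Lemma~\ref{lem:admissible sets are externally hyperconvex} and Proposition~\ref{prop:transitivity}, finished off with Lemma~\ref{lem:externally hyperconvex neighborhood}. If anything, you spell out two steps the paper leaves implicit, namely that $d(x,a)=s$ for every $a\in A\cap B^\lambda(x,s)$, and that external hyperconvexity in $X_\lambda$ restricts to external hyperconvexity in the intermediate subspace $A$ before transitivity is applied.
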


\begin{proof}
	Clearly $B(x,r)\cap X_{\lambda_0} \supset B^{\lambda_0}(B^\lambda(x,s)\cap A,r-s)$. Therefore assume $y \in B(x,r)\cap X_{\lambda_0}$. Then there is some $a\in A\cap B^\lambda(x,s)$ with $d(x,y) = d(x,a)+d(a,y)$ and $d(a,y) \leq r-s$ by Lemma~\ref{lem:exact distance}.
	
	We have $B^\lambda(x,s)\cap A \in \mathcal{E}(A)$ by Lemma~\ref{lem:admissible sets are externally hyperconvex} and therefore since $A \in \mathcal{E}(X_{\lambda_0})$ we also get $B^\lambda(x,s)\cap A \in \mathcal{E}(X_{\lambda_0})$ by Proposition~\ref{prop:transitivity}. Finally we conclude by Lemma~\ref{lem:externally hyperconvex neighborhood} that $B^{\lambda_0}(B^\lambda(x,s)\cap A,r-s) \in \mathcal{E}(X_{\lambda_0})$.
\end{proof}

\begin{proof}[Proof of Theorem~\ref{thm:externally_hyperconvex}.]
	Let $\{B(x_i,r_i)\}_{i\in I}$ be a collection of closed balls in $X$ with $d(x_i,x_j)\leq r_i+r_j$. First observe that there is at most one $\lambda_0 \in \Lambda$ such that $d(x_i,A) > r_i$ for some $x_i \in X_{\lambda_0}$. If there is none, fix any $\lambda_0 \in \Lambda$. Now define $A_i=B(x_i,r_i)\cap X_{\lambda_0}\neq \emptyset$. We claim that $A_i \cap A_j \neq \emptyset$ for all $i,j\in I$.
		
	Let $x_i \in X_\lambda$ and $x_j \in X_{\lambda'}$. First assume $\lambda,\lambda' \neq \lambda_0$. If $\lambda=\lambda'$ we have $A_i\cap A_j \neq \emptyset$ since $A \in \mathcal{E}(X_\lambda)$. If $\lambda \neq \lambda'$ there is some $a\in A\cap B(x_i,d(x_i,A))$ with $d(x_i,x_j)=d(x_i,a)+d(a,x_j)$ and therefore $\emptyset \neq B^{\lambda'}(a,r_i-d(x_i,A)) \cap B^{\lambda'}(x_j,r_j) \cap A \subset A_i \cap A_j$ by external hyperconvexity of $A$ in $X_{\lambda'}$. Finally if $\lambda'=\lambda_0$ we either get $B^{\lambda_0}(x_i,r_i) \cap B^{\lambda_0}(x_j,r_j) \neq \emptyset$ if $\lambda=\lambda'$ or $\emptyset \neq B^{\lambda_0}(a,r_i-d(x_i,A)) \cap B^{\lambda_0}(x_j,r_j) \subset A_i \cap A_j$ if $\lambda \neq \lambda'$ by hyperconvexity of $X_{\lambda_0}$. 
	
	Therefore using Lemma~\ref{lem:balls} the sets $A_i$ are a collection of bounded pairwise intersecting externally hyperconvex subsets of $X_{\lambda_0}$ and therefore have non-empty intersection $\bigcap_i A_i \neq \emptyset$ by Proposition~\ref{prop:intersection}, implying $\bigcap_i B(x_i,r_i) \neq \emptyset$.
	
	To see that $A$ is externally hyperconvex in $X$ proceed as before adding $A$ to the family $\{A_i\}_i$.
\end{proof}

%%%%%%%%%%%%%%%%%%%%%%%%%%%%%%%%%%%%%%%%%%%%%%%%%%%%%%%%%%%%%%%%%%%%%%%%%%%

\section{Basic example}\label{sec:example}

%%%%%%%%%%%%%%%%%%%%%%%%%%%%%%%%%%%%%%%%%%%%%%%%%%%%%%%%%%%%%%%%%%%%%%%%%%%

In $l_\infty^2$ any halfspace is a hyperconvex subspace, where its boundary line is isometric to $\mathbb{R}$. It is natural to glue two such halfspaces along its boundaries. Up to isometry a halfspaces is given by $H = \{(\xi_1,\xi_2) \in l_\infty^2 : \xi_2 \geq a \xi_1 \}$ for $0 \leq a \leq 1$. Observe that the gluing set $A = \{(\xi_1,\xi_2) \in l_\infty^2 : \xi_2 = a \xi_1 \}$ is externally hyperconvex in $H$ for $a=0$ and strongly convex in $H$ for $a=1$.

Now given two halfspaces we may assume that $H_1= \{(\xi_1,\xi_2) \in l_\infty^2 : \xi_2 \geq a \xi_1 \}$ and $H_2 =  \{(\xi_1,\xi_2) \in l_\infty^2 : \xi_2 \leq b \xi_1 \}$ with $0 \leq a \leq b \leq 1$. Then there are two possibilities to glue them depending on the orientation of the boundary line.

Let us first investigate the case where we glue by identifying $(\xi,a \xi)\in H_1$ with $(\xi,b \xi)\in H_2$. Then the resulting space is hyperconvex if and only if $a=b$.

For $a=b$ we obtain a space which is isometric to $l_\infty^2$ and therefore hyperconvex. Otherwise consider the pairwise intersecting closed balls of radius $1$ and centers $x_1=(0,1-a) \in H_1$, $x_2=(0,-b-1) \in H_2$ and $x_3=(2,b-1) \in H_2$. The intersection of $B(x_1,1)$ with $H_2$ is given by the union of all balls with center $y \in A\cap B(x_1,1)$ and radius $r_y = 1-d(x_1,y)$. A calculation shows that
\begin{align*}
	B(x_1,1) \cap H_2 = \left\{ (\xi_1,\xi_2) \in H_2 : -1 \leq \xi_1 \leq 1 \text{ and } \xi_2 \geq \frac{b-a}{1+a} \xi_1 - \frac{1+b}{1+a}a \right\}
\end{align*}  
and therefore the three balls have no common intersection point for $a < b$.

It remains to consider the case where we reflect $H_1$ before gluing, i.e. $H_1= \{(\xi_1,\xi_2) \in l_\infty^2 : \xi_2 \geq -a \xi_1 \}$ and $H_2 =  \{(\xi_1,\xi_2) \in l_\infty^2 : \xi_2 \leq b \xi_1 \}$ with $0 \leq a \leq b \leq 1$ glued by identifying $(\xi,-a \xi)\in H_1$ with $(\xi,b \xi)\in H_2$.
If $a=b=1$ or $a=b=0$ we have again that the resulting space is injective according to Theorems~\ref{thm:strongly_convex} and \ref{thm:externally_hyperconvex}. Indeed in both cases it is isometric to $l_\infty^2$. Otherwise choose again $x_1=(0,1-a) \in H_1$ as above. Then we get
\begin{align*}
	B(x_1,1)\cap H_2 = \left\{ (\xi_1,\xi_2) \in H_2 : -1 \leq \xi_1 \leq 1 \text{ and } \xi_2 \geq \max \left\{ l , m \xi_1- q \right\} \right\}.
\end{align*}
for $l=-1 + (1-b) \cdot \frac{1-a}{1+a} \leq -b, m=\frac{a+b}{1-a}$ and $q=a \cdot \frac{1+b}{1-a}$. Especially we have $(-1,l) \in B(x_1,1)\cap H_2$ and $(1,\xi_2) \in B(x_1,1)\cap H_2$ if and only if $\xi_2 = b$. Therefore if $a\neq 1$ and $b \neq 0$ the three balls with radius $1$ and centers $x_1=(0,1-a) \in H_1$, $x_2=(0,l-1) \in H_2$ and $x_3=(2,b-1) \in H_2$ are pairwise intersecting but have no common intersection point. Hence the resulting space is not hyperconvex.

\begin{figure}[ht!]
	\centering
	
	\labellist
	\small\hair 2pt
	\pinlabel $x_1$ [rb] at 88 245
	\pinlabel $x_2$ [rt] at 88 50
	\pinlabel $x_3$ [lt] at 180 95
	\endlabellist	
	\includegraphics[scale=0.5, width=84.12mm, height=121.3mm]{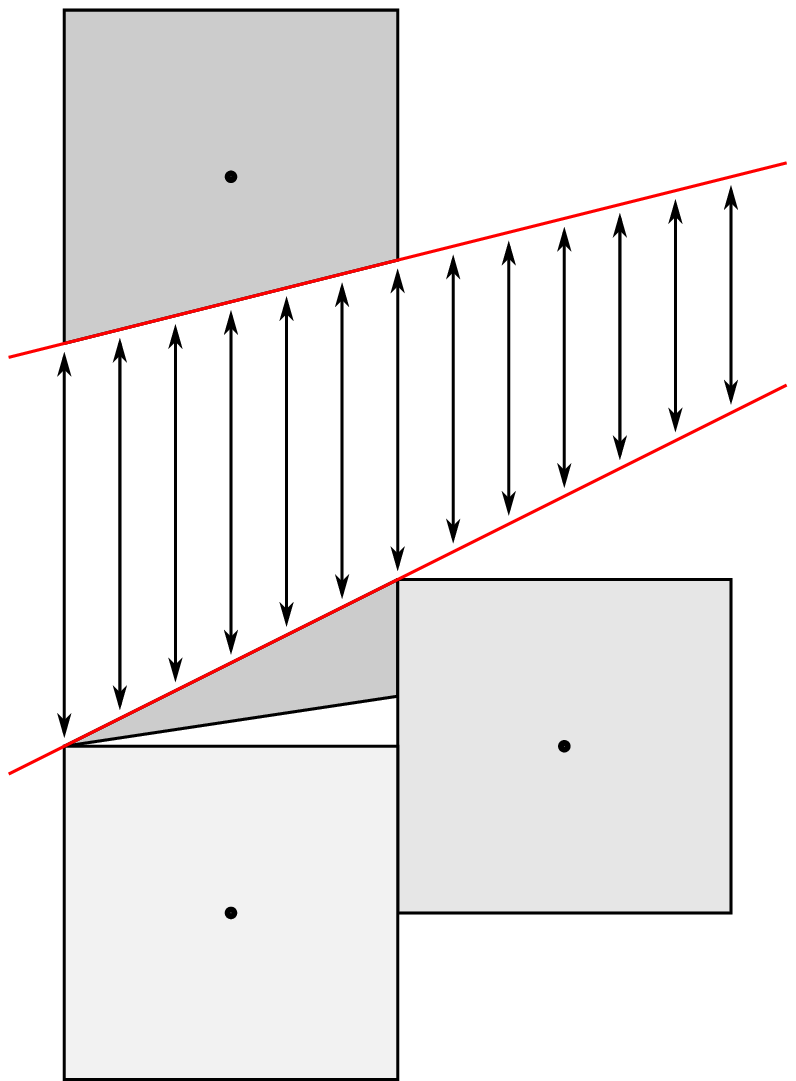}
	\labellist
	\small\hair 2pt
	\pinlabel $x_1$ [rb] at 88 295
	\pinlabel $x_2$ [rt] at 88 45
	\pinlabel $x_3$ [lt] at 180 105
	\endlabellist	
	\includegraphics[scale=0.5, width=84.12mm, height=121.36mm]{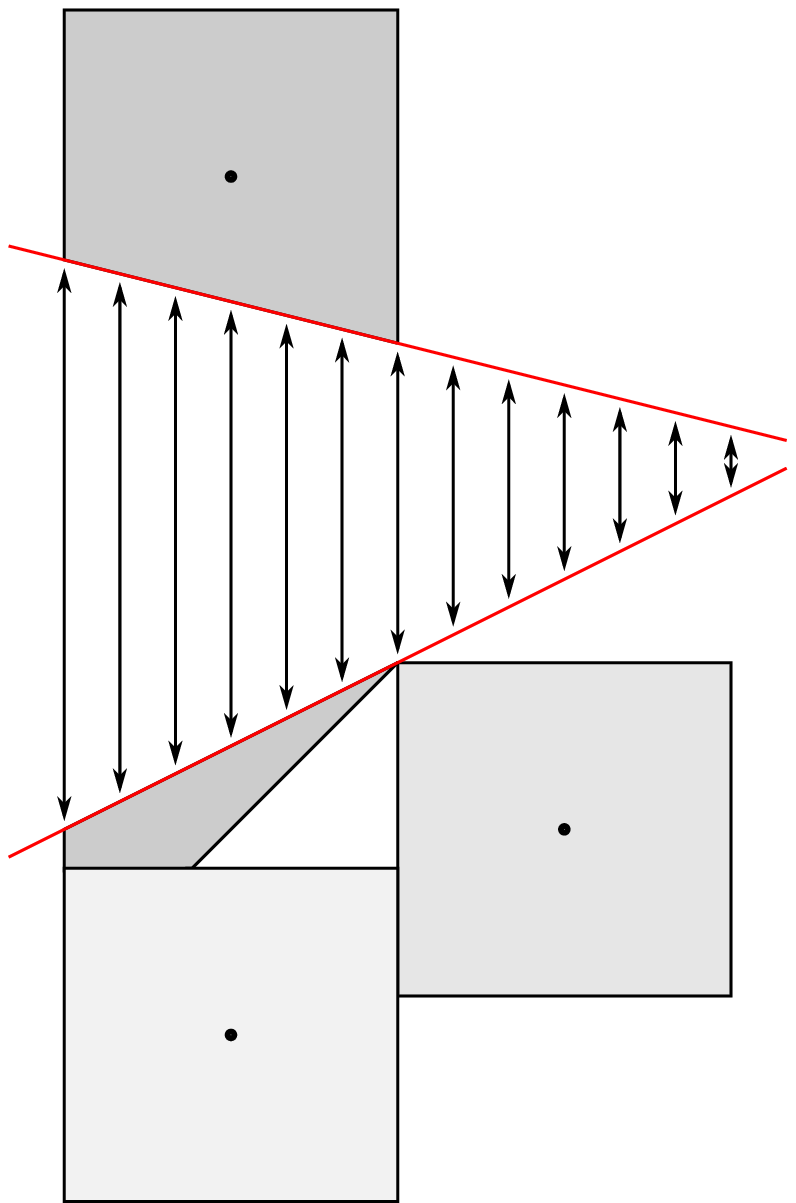}
	\caption{Gluing halfspaces in $l_\infty^2$.}
	\label{fig:gluing_example}
	
\end{figure}

%\begin{figure}[ht!]
%	\centering
%	
%	\labellist
%	\small\hair 2pt
%	\pinlabel $p_1$ [rt] at 88 38
%	\pinlabel $p_2$ [lt] at 170 80
%	\pinlabel $p_3$ [rb] at 75 135
%	\endlabellist	
%	\includegraphics[scale=.7]{pictures/gluing_example_3}
%	\labellist
%	\small\hair 2pt
%	\pinlabel $p_1$ [rt] at 88 38
%	\pinlabel $p_2$ [lt] at 170 80
%	\pinlabel $p_3$ [rb] at 70 125
%	\endlabellist
%	\includegraphics[scale=.7]{pictures/gluing_example_4}
%	\caption{Gluing halfspaces with $0 < b < a$ and $0 < -b < a$.}
%	\label{fig:gluing_example}
%\end{figure}

%\textbf{Acknowledgments.} I would like to thank Prof. Dr. Urs Lang for reading this and earlier versions of the paper and making helpful suggestions. This work was partially supported by the Swiss National Science Foundation.
	
%%%%%%%%%%%%%%%%%%%%%%%%%%%%%%%%%%%%%%%%%%%%%%%%%%%%%%%%%%%%%%%%%%%%%%%%%%%
% Bibliographie
%%%%%%%%%%%%%%%%%%%%%%%%%%%%%%%%%%%%%%%%%%%%%%%%%%%%%%%%%%%%%%%%%%%%%%%%%%%
\newpage

%\nocite{*}

\bibliographystyle{amsplain}
\bibliography{mybib}

%%%%%%%%%%%%%%%%%%%%%%%%%%%%%%%%%%%%%%%%%%%%%%%%%%%%%%%%%%%%%%%%%%%%%%%%%%%

\end{document}